\newtheorem{Lemma}{Lemma}
\newtheorem{Theorem}{Theorem}
\newcommand{\R}{\mathbb{R}}
\newcommand{\eff} { \mathrm{eff} }
\title{Symmetry reduction of the 3-body problem in $\R^4$}
\author{Holger R.~Dullin, J\"urgen Scheurle}
\date{}                                           
\begin{document}

\begin{abstract}
The 3-body problem in $\R^4$ has 24 dimensions and is invariant under translations and rotations. 
We do the full symplectic symmetry reduction and obtain a reduced Hamiltonian in local symplectic coordinates
on a reduced phase space with 8 dimensions. The Hamiltonian depends on two parameters $\mu_1 > \mu_2 \ge 0$, 
related to the conserved angular momentum. The limit $\mu_2 \to 0$ corresponds to the 3-dimensional limit.
We show that the reduced Hamiltonian has relative equilibria that are local minima and hence Lyapunov stable when 
$\mu_2$ is sufficiently small. 
This proves the existence of balls of initial conditions of full dimension that do not contain any orbits that are unbounded.
\end{abstract}

 \dedicatory{Dedicated to James Montaldi}

\maketitle

\section{Introduction}

Consider $N$ masses $m_i$ at positions $r_i \in \R^d$, $i = 1, \dots, N$, 
moving under the influence of Newtonian attraction with potential 
\[
   U = -\sum_{1 \le i < j \le N} \frac{m_i m_j}{ || r_i - r_j||} ,
\]
so that Newton's equations of motion are 
\[
   m_i \ddot r_i  = - \nabla_{r_i} U, \quad i = 1, \dots, N.
\]
Here ${|| \cdot  ||}$ denotes the Euclidean norm on $\R^d$. These equations are invariant under translations and Galilein boosts $r_i \to r_i + c + v t$ for 
some constant vectors $c, v \in \R^d$ and under rotations $r_i \to M r_i$ for some 
constant matrix $M \in SO(d)$. The corresponding conserved quantities are the total 
linear momentum $\sum m_i \dot r_i$ and the total angular momentum
$\sum m_i r_i \wedge \dot r_i$. In addition there is a scaling symmetry $r_i \to s r_i$ and
$t \to t s^{3/2}$ for constant scalar $s$.

The goal of this paper is to reduce these equations by translation and rotation  symmetry, specifically 
for $N=3$ and $d=4$. The cases $d = 1, 2, 3$ have been studied extensively in the 
classical literature, see, e.g., \cite{Whittaker37}. Larger $d$ have more recently been studied 
by Albouy and Chenciner \cite{AlbouyChenciner98} and Chenciner \cite{Chenciner11}.
For $N=3$ the case $d=4$ is interesting because new dynamics appears compared to $d=3$.
In particular the balanced configurations introduced in \cite{AlbouyChenciner98} are relative 
equilibria that do not exist for $d=3$. 
For $N=3$, cases with $d>4$ do not, by contrast, produce new dynamics compared to $d=4$.
The reduction we are going to present holds for an arbitrary potential that 
depends on the distances $||r_i - r_j||$ only. It is based on a novel approach to the well-known procedure 
of eliminating the nodes which dates back to Jacobi \cite{Jacobi1843} in the case $d=3$. 

With the fully reduced Hamiltonian function (Hamiltonian) at hand it is then straightforward to find new relative 
equilibria and analyse their stability. Our second main theorem shows that there is a family of 
relative equilibria that corresponds to minima of the Hamiltonian, and thus constitute solutions of the 
3-body problem that are Lyapunov stable. A simple corollary  is that  M.~Herman's 
``Oldest problem in dynamical systems'' on whether the set of unbounded solutions 
is dense for negative energy \cite{Herman98} can be answered  for $d=4$ by ``No!": 
There is a ball of full dimension that does not contain any unbounded solutions. 

The realisation that the 3-body problem in $\mathbb{R}^4$ has Lyapunov stable relative equilibria
was conceived in discussions with Alain Albouy, Rick Moeckel, James Montaldi and Alain Chenciner 
at the Observatory in Paris in 2015.
Some of the results of these discussions are presented in the preprint \cite{AD19}.
In \cite{AD19} is it shown that there is a global minimum of the Hamiltonian for generic angular momentum, 
and some properties of the families of relative equilibria are proved. 
By contrast, in the present paper we prove that all three families of relative equilibria are minima 
when the angular momentum is sufficiently close to the (non-generic) 3-dimensional case.

After this paper was finished a related preprint \cite{SS19} appeared. 
In that preprint only the subgroup $SO(2)\times SO(2)$ of the full rotational symmetry group $SO(4)$ is 
considered in the reduction and hence  somewhat different results are obtained.

\section{Translation Reduction}

Translation reduction is well known and can be achieved by introducing Jacobi vectors. 
Define vectors $x_i \in \R^d$ by
\[
    x_1 =  r_2 - r_3, \quad
    x_2 =  r_1 - \frac{m_2 r_2 + m_3 r_3}{m_2 + m_3} , \quad
    x_3 = \frac{m_1 r_2 + m_2 r_2 + m_3 r_3}{m_1 + m_2 + m_3}
\]
and conjugate momenta $y_i \in \R^d$ by 
\[
   y_1 = \frac{-m_3\dot r_2 + m_2 \dot r_3}{m_2+m_3}, \quad
   y_2 =  \frac{ (m_2 + m_3) \dot r_1 + m_1 \dot r_2 + m_1 \dot r_3 }{m_1+m_2+m_3} , \quad 
   y_3 = m_1 \dot r_1 + m_2 \dot r_2 + m_3 \dot r_3 \,.
\]
Clearly $x_3$ is the centre of mass and $y_3$ is the total linear momentum,
both of which are set to zero from now on.

The mutual distances in these coordinates become
\[
   ||r_2 - r_3|| = ||x_1||, \quad
   ||r_3 - r_1|| =  || a_2 x_1  + x_2 ||, \quad
   ||r_1 - r_2|| = || a_3 x_1- x_2|| 
\]
with $a_i = m_i/(m_2 + m_3)$, $i = 2,3$, so that the potential is a function
of the scalar products $x_i \cdot x_j$, $i, j = 1, 2$ only.

Define the reduced masses
\[
	\nu_1 = \frac{m_2m_3}{m_2+m_3}, \quad
	\nu_2 = \frac{m_1(m_2+m_3)}{m_1+m_2+m_3}
\]
so that the translation reduced Hamiltonian becomes
\begin{equation} \label{eqn:Hxy}
   H = \frac{1}{2\nu_1}  ||y_1||^2 + \frac{1}{2\nu_2}|| y_2 ||^2 + V( ||x_1||^2 , ||x_2||^2, x_1 \cdot x_2)
\end{equation}
with $x_1, x_2, y_1 , y_2 \in \R^d$. 

The Hamiltonian \eqref{eqn:Hxy} is invariant under rotations
$(x_1, x_2, y_1, y_2) \to ( M x_1, Mx_2, My_1, My_2)$.
The corresponding angular momentum is given by the angular momentum 
\[
    L = x_1 \wedge y_1 + x_2 \wedge y_2 \in so(d)
\]
which for $d=4$ has 6 independent components. The wedge product 
can be expressed as an anti-symmetric matrix using $x \wedge y = x y^t - y x^t$.
Since $L$ is anti-symmetric, for $d=4$
its characteristic polynomial is even and has two invariants:
The determinant of $L$ which is a perfect square called the Pfaffian of $L$,
and the trace of $L^2$. Denote the eigenvalues of $L$ by $\pm i \mu_1$ and
$\pm i \mu_2$, so that $\mathrm{Pf}(L) = \mu_1 \mu_2$ and $\mathrm{tr} L^2 = -2(\mu_1^2 + \mu_2^2)$.

\section{Rotation Reduction}

Reduction by rotations does depend on the dimension $d$. 
In order to generalize the reduction procedure due to Jacobi as, e.g., described in \cite{Whittaker37},
to the case $d=4$, we introduce a basis for the plane in $\R^4$ spanned by the two vectors $x_1$ and $x_2$
through an orthogonal rotation matrix $M$. In this basis we can write 
\[
     x_1 = M  q_{12}, \, q_{12} = (q_1, q_2, 0, 0)^t, \quad x_2 = M  q_{34}, \, q_{34} = (q_3, q_4, 0, 0)^t \,.
\]
Since the potential is a function of the scalar products $x_i \cdot x_j$, 
in the new coordinates the potential depends only on $q_1^2 + q_2^2$, $q_3^2 + q_4^2$, 
$q_1 q_3 + q_2 q_4$. The main problem is to determine the form of the kinetic energy in the 
new coordinates.

Define two essential quantities: The oriented area $A$ of the triangle formed by two vectors $x_1$ and $x_2$ in configuration space and an angular momentum $L_3$ as, respectively, 
\[
     A = \tfrac12 (q_1 q_4 - q_2 q_3), \quad
     L_3 = q_1 p_2 - q_2 p_1 + q_3 p_4 - q_4 p_3 \,.
\]
The orthogonal matrix $M$ is a product of elementary rotation matrices.
Since $(q_1, q_2, q_3, q_4)$ already give 4 degrees of freedom, the rotation $M$ 
needs to have another 4 degrees of freedom, say $(\psi_1, \psi_2, \theta_1, \theta_2)$. 
Notice that not all of the 6 dimensions of $SO(4)$ are used in this way. 

Denote a basis of $so(4)$ by 
$B_{ij} = E_{ij} - E_{ji}$, where $E_{ij}$ is the matrix with all entries equal to zero except for the $ij$-entry which is 1.
Now define the rotation matrix $M \in SO(4)$ by
\[
    M = \exp(B_{12} \theta_1 ) \exp(B_{34}  \theta_2 ) \exp( B_{13} \psi_1 ) \exp( B_{24} \psi_2 )  = M_{\theta} M_{\psi}\,.
\]
Notice that the first two factors and the last two factors commute.

\begin{Lemma}
A symplectic transformation from $(x_1, x_2, y_1, y_2) \in \R^{16}$ to 
new local coordinates  $q_{new} = (q_1, q_2, q_3, q_4, \psi_1, \psi_2, \theta_1, \theta_2)$ and 
momenta $p_{new} = (p_1, p_2, p_3, p_4,  p_{\psi_1}, p_{\psi_2}, p_{\theta_1}, p_{\theta_2})$ is given by 
\begin{equation} \label{eqn:sympl}
   x_1 = M \begin{pmatrix} q_1 \\ q_2 \\ 0 \\ 0 \end{pmatrix}, \quad
   x_2 = M \begin{pmatrix} q_3 \\ q_4 \\ 0 \\ 0 \end{pmatrix}, \quad
   y_1 =  M \begin{pmatrix}  p_1 \\   p_2 \\ \alpha_1 \\ \alpha_2 \end{pmatrix}, \quad
   y_2 =  M \begin{pmatrix}  p_3 \\ p_4 \\ \alpha_3 \\ \alpha_4 \end{pmatrix}
\end{equation}
where $\alpha_i$ are linear in all momenta and given by
\[
\alpha_1 =    q_3 B  -  \frac{q_4 p_{\psi_1} }{2A}, \quad
\alpha_2 =   -q_4C + \frac{q_3  p_{\psi_2}}{2A}, \quad
\alpha_3 =   - q_1 B + \frac{q_2  p_{\psi_1}}{2A}, \quad
\alpha_4 =   q_2 C  - \frac{q_1  p_{\psi_2}}{2A}
\]
\begin{align*}
B &= \frac{   L_3 \sin 2\psi_1 + 2(p_{\theta_1} \sin\psi_1 \cos\psi_2 + p_{\theta_2} \cos\psi_1 \sin\psi_2)  }{ 2 A (\cos2\psi_1 - \cos2\psi_2) } \\
C &= \frac{   L_3 \sin 2\psi_2 + 2(p_{\theta_1} \cos\psi_1 \sin\psi_2 + p_{\theta_2} \sin\psi_1 \cos\psi_2)  }{ 2 A (\cos2\psi_1 - \cos2\psi_2) } 
\end{align*}
\end{Lemma}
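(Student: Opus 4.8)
The plan is to exploit the fact that the configuration part of \eqref{eqn:sympl}, namely $(x_1, x_2) = \Phi(q_1, \dots, q_4, \psi_1, \psi_2, \theta_1, \theta_2)$ with $x_1 = M q_{12}$ and $x_2 = M q_{34}$, is a genuine point transformation between two $8$-dimensional configuration spaces, invertible away from the loci $A = 0$ and $\cos 2\psi_1 = \cos 2\psi_2$ (which is why the statement only claims local coordinates). A phase-space map is symplectic precisely when it is the cotangent lift of such a point transformation, and the cotangent lift is characterised by preservation of the tautological one-form. It therefore suffices to verify the single identity
\[
   y_1 \cdot dx_1 + y_2 \cdot dx_2 = \sum_{i=1}^4 p_i\, dq_i + p_{\psi_1} d\psi_1 + p_{\psi_2} d\psi_2 + p_{\theta_1} d\theta_1 + p_{\theta_2} d\theta_2 ,
\]
with $x_i, y_i$ substituted from \eqref{eqn:sympl}. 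Equivalently, one checks that the given $y_1, y_2$ satisfy $(D\Phi)^t (y_1, y_2) = p_{new}$, i.e. that $p_{\psi_k} = y_1 \cdot \partial_{\psi_k} x_1 + y_2 \cdot \partial_{\psi_k} x_2$ and similarly for the $\theta_k$ and the $q_i$.

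First I would write $y_j = M \tilde y_j$ with $\tilde y_1 = (p_1, p_2, \alpha_1, \alpha_2)^t$ and $\tilde y_2 = (p_3, p_4, \alpha_3, \alpha_4)^t$, and use $M^t M = I$ to split the one-form as
\[
   y_1 \cdot dx_1 + y_2 \cdot dx_2 = \bigl(\tilde y_1^t\, dq_{12} + \tilde y_2^t\, dq_{34}\bigr) + \bigl(\tilde y_1^t\, \Omega\, q_{12} + \tilde y_2^t\, \Omega\, q_{34}\bigr),
\]
where $\Omega = M^t dM \in so(4)$ is the matrix-valued Maurer--Cartan form. Because $q_{12}$ and $q_{34}$ have vanishing third and fourth entries, the first bracket collapses immediately to $p_1 dq_1 + p_2 dq_2 + p_3 dq_3 + p_4 dq_4$, producing the $q$-part of the target. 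All of the content is thus concentrated in showing that the second bracket equals $p_{\psi_1} d\psi_1 + p_{\psi_2} d\psi_2 + p_{\theta_1} d\theta_1 + p_{\theta_2} d\theta_2$. To evaluate it I would compute $\Omega$ from the factorisation $M = M_\theta M_\psi$: since $B_{12}, B_{34}$ commute and $B_{13}, B_{24}$ commute, one has $M_\theta^t dM_\theta = B_{12}\, d\theta_1 + B_{34}\, d\theta_2$ and $M_\psi^t dM_\psi = B_{13}\, d\psi_1 + B_{24}\, d\psi_2$, whence $\Omega = M_\psi^t (B_{12}\, d\theta_1 + B_{34}\, d\theta_2) M_\psi + B_{13}\, d\psi_1 + B_{24}\, d\psi_2$.

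I expect the main obstacle to be the reduction of the resulting coefficients. The conjugation $M_\psi^t (B_{12}\, d\theta_1 + B_{34}\, d\theta_2) M_\psi$ rotates the $\theta$-generators into the $1$--$3$, $2$--$4$ and mixed planes with $\psi$-dependent trigonometric weights, so that the coefficients of $d\theta_1, d\theta_2, d\psi_1, d\psi_2$ in the second bracket become entangled linear combinations of $\alpha_1, \dots, \alpha_4$ and of $L_3$. The factors $1/(2A)$ and $1/(\cos 2\psi_1 - \cos 2\psi_2)$ in $\alpha_i, B, C$ are exactly what is needed to invert this $\psi$-dependent linear system: the definitions of $B$ and $C$ are forced by requiring the $d\psi_1$ and $d\psi_2$ coefficients to equal $p_{\psi_1}$ and $p_{\psi_2}$, and one then verifies that with these values the $d\theta_1, d\theta_2$ coefficients automatically collapse to $p_{\theta_1}, p_{\theta_2}$.

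Since this final step is the verification of a linear identity in the momenta with trigonometric coefficients, it is routine but lengthy; I would organise it by recording $\Omega$ as an explicit antisymmetric matrix of one-forms, reading off the four pairings against $q_{12}$ and $q_{34}$, and simplifying with the area relation $q_1 q_4 - q_2 q_3 = 2A$. Having established the one-form identity, it remains only to note that $\Phi$ is a local diffeomorphism wherever $A \ne 0$ and $\cos 2\psi_1 \ne \cos 2\psi_2$, so that \eqref{eqn:sympl} is a symplectomorphism onto its image in those local coordinates.
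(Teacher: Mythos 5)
Your argument is correct and rests on the same underlying structure as the paper's proof --- both treat \eqref{eqn:sympl} as the cotangent lift of the point transformation $(x_1,x_2)=\bigl(Mq_{12},Mq_{34}\bigr)$, and both locate the nontrivial trigonometry in $M^t\,dM$ (your Maurer--Cartan form $\Omega$ is exactly the source of the last four columns of the paper's matrices $U_{12}, U_{34}$). The difference is one of direction: the paper \emph{derives} the momentum formulas by forming the $8\times8$ Jacobian $U$, computing $\det U=\tfrac12 A^2(\cos2\psi_1-\cos2\psi_2)$, and applying $p_{old}=\mathrm{diag}(M,M)\,U^{-t}p_{new}$, whereas you \emph{verify} the stated formulas by checking the equivalent forward identity $U^t\tilde y=p_{new}$, i.e.\ preservation of the tautological one-form. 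Since the answer is given in the lemma, your route avoids inverting an $8\times8$ matrix and replaces it with a linear check (e.g.\ the $d\psi_1$ coefficient is $-(q_1\alpha_1+q_3\alpha_3)=p_{\psi_1}$ by the area relation $q_1q_4-q_2q_3=2A$, independently of $B$); the cost is that the derivation of $B$ and $C$ is less transparent --- they appear as the unique solution forced by the $\theta$-coefficients rather than as output of the inversion. Both proofs identify the same singular locus $A=0$ or $\cos2\psi_1=\cos2\psi_2$, and both defer the same routine trigonometric bookkeeping. One small caution: your opening claim that a phase-space map is symplectic \emph{precisely} when it is a cotangent lift is too strong (a fibered symplectic map over a point transformation may differ from the lift by a closed one-form on the base); only the implication you actually use --- preservation of the canonical one-form implies preservation of $\omega$ --- is needed, so this does not affect the proof.
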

\begin{proof}
In configuration space, the old coordinates are expressed as functions of the new coordinates, 
$q_{old} = F ( q_{new} )$. This is extended to a canonical symplectic transformation by 
cotangent lift $p_{old}  = (DF)^{-t}  p_{new}$. In our case there is a special 
structure because in front of the vectors on the right-hand sides of the relations in \eqref {eqn:sympl} we have an orthogonal matrix as a prefactor. 


The derivative of $x_1$ is given by
\[
   \frac{ \partial x_1 }{\partial q_{new}} = M \begin{pmatrix} 
   1 & 0 & 0 & 0 & 0 & 0 & q_2 \cos\psi_1\cos\psi_2 & q_2 \sin\psi_1\sin\psi_2  \\
   0 & 1 & 0 & 0 & 0 & 0 & -q_1 \cos\psi_1\cos\psi_2 & -q_1 \sin\psi_1\sin\psi_2  \\
   0 & 0 & 0 & 0 & -q_1 & 0 & q_2 \sin\psi_1\cos\psi_2 & -q_2 \cos\psi_1\sin\psi_2  \\
   0 & 0 & 0 & 0 & 0 &  -q_2 & -q_1  \cos\psi_1\sin\psi_2 & q_1 \sin\psi_1\cos\psi_2    
   \end{pmatrix} = M U_{12} \\\ ,
\]
and the derivative of $x_2$ is
\[
   \frac{ \partial x_2  }{\partial q_{new}} = M \begin{pmatrix} 
   0 & 0 & 1 & 0 & 0 & 0 & q_4 \cos\psi_1\cos\psi_2 & q_4 \sin\psi_1\sin\psi_2  \\
   0 & 0 & 0 & 1 & 0 & 0 & -q_3 \cos\psi_1\cos\psi_2 & -q_3 \sin\psi_1\sin\psi_2  \\
   0 & 0 & 0 & 0 & -q_3 & 0 & q_4 \sin\psi_1\cos\psi_2 & -q_4  \cos\psi_1\sin\psi_2  \\
   0 & 0 & 0 & 0 & 0 & -q_4 & -q_3  \cos\psi_1\sin\psi_2 & q_3 \sin\psi_1\cos\psi_2  
   \end{pmatrix} = M U_{34} \,.
\]
The non-trivial entries can be computed from $M^t \dot M$. For example the 
last column of $U_{12}$ is given by $ M^t ( \partial M / \partial \theta_2 )q_{12}  $.
Forming the matrix $U^t = ( U_{12}^t , U_{34}^t ) $ the hard work is to invert $U$. 
The determinant of $U$  is 
\[
    \det U = \frac12 A^2 ( \cos 2\psi_1 - \cos 2 \psi_2) \,.
\]
Now the cotangent lift is given by  $p_{old} = \mathrm{diag}( M , M ) U^{-t} p_{new}$ and 
this gives  the formulas claimed.
\end{proof}

\begin{Lemma}
In the new coordinates the Hamiltonian becomes
\begin{equation} \label{eqn:Hpsi}
   H = \frac12 ( p_1^2 + p_2^2 + \tilde f(q_3, q_4) ) + \frac12( p_3^2 + p_4^2 + \tilde f(q_1, q_2)) + V( q_1^2+q_2^2, q_3^2 + q_4^2, q_1 q_3 + q_2 q_4)
\end{equation}
where
\[
    \tilde f(q_i, q_j)  = \left( q_i B - \frac{ q_j p_{\psi_1} }{ 2 A} \right)^2 + \left(-q_j C + \frac{ q_i p_{\psi_2}}{  2 A } \right)^2 \,.
\]
In particular, $H$ is independent of $\theta_1$ and $\theta_2$.
\end{Lemma}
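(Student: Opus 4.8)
The plan is to substitute the symplectic transformation \eqref{eqn:sympl} directly into the translation-reduced Hamiltonian \eqref{eqn:Hxy} and simplify. Since $M$ is orthogonal, the kinetic terms $\frac{1}{2\nu_1}\|y_1\|^2$ and $\frac{1}{2\nu_2}\|y_2\|^2$ are $M$-invariant, so
\[
   \|y_1\|^2 = p_1^2 + p_2^2 + \alpha_1^2 + \alpha_2^2, \qquad
   \|y_2\|^2 = p_3^2 + p_4^2 + \alpha_3^2 + \alpha_4^2 .
\]
(For notational simplicity I would absorb the reduced masses, treating the kinetic prefactors as normalized, matching the displayed form of \eqref{eqn:Hpsi}.) First I would read off from the explicit formulas for the $\alpha_i$ in the previous lemma that
\[
   \alpha_1^2 + \alpha_2^2 = \Bigl(q_3 B - \tfrac{q_4 p_{\psi_1}}{2A}\Bigr)^2 + \Bigl(-q_4 C + \tfrac{q_3 p_{\psi_2}}{2A}\Bigr)^2 = \tilde f(q_3,q_4),
\]
and likewise $\alpha_3^2 + \alpha_4^2 = \tilde f(q_1,q_2)$, which is immediate upon comparing with the defining expression for $\tilde f$. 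This already accounts for the full form of the kinetic energy in \eqref{eqn:Hpsi}.

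For the potential term I would invoke the observation made just before the first lemma: because $M$ is orthogonal, the scalar products $x_i\cdot x_j$ equal $q_{ij}\cdot q_{kl}$, so $\|x_1\|^2 = q_1^2 + q_2^2$, $\|x_2\|^2 = q_3^2 + q_4^2$, and $x_1\cdot x_2 = q_1 q_3 + q_2 q_4$. Hence $V$ takes exactly the form shown, with no dependence on the angles at all. Assembling the kinetic and potential pieces gives \eqref{eqn:Hpsi}.

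The independence of $H$ from $\theta_1$ and $\theta_2$ then follows structurally rather than by further computation: I would point out that $\theta_1,\theta_2$ enter only through the prefactor $M = M_\theta M_\psi$, and since both the kinetic energy (an $M$-invariant sum of squared norms) and the potential (a function of $M$-invariant scalar products) are manifestly invariant under the left action $M \mapsto M_\theta M$, the variables $\theta_1,\theta_2$ cannot appear. Concretely, the quantities $B$, $C$, $A$, and $L_3$ defining the $\alpha_i$ involve only the $\psi$-angles and the momenta, never $\theta_1,\theta_2$, so $\tilde f$ is $\theta$-free as well; this is what must be checked against the formulas of the first lemma.

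The main obstacle I anticipate is the bookkeeping in verifying that $\alpha_1^2+\alpha_2^2$ collapses to precisely $\tilde f(q_3,q_4)$ with the stated cross-term structure, and in confirming that no residual $\theta$-dependence survives through $B$ and $C$. These are in principle routine once the explicit $\alpha_i$ are in hand, but they rely on the nontrivial cancellations built into the inversion of $U$ in the preceding lemma; the cleanest route is to note that the $\theta$-independence is forced a priori by the left-invariance of $H$ under $M_\theta$, so the explicit check serves only as confirmation of the earlier computation rather than as an independent argument.
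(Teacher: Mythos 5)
Your proposal is correct and is essentially the paper's own argument: the paper's proof consists of the single sentence ``This is a simple consequence of the previous Lemma,'' and what you write out---orthogonal invariance of the kinetic norms giving $\|y_1\|^2 = p_1^2+p_2^2+\alpha_1^2+\alpha_2^2$ (and similarly for $y_2$), identification of $\alpha_1^2+\alpha_2^2$ with $\tilde f(q_3,q_4)$, $M$-invariance of the scalar products for the potential, and the observation that $A$, $L_3$, $B$, $C$ contain no $\theta_i$ so that $H$ is $\theta$-free---is exactly the computation that sentence is pointing to. Your remark about absorbing the reduced masses correctly flags the only wrinkle, namely that the displayed formula in the Lemma writes $\tfrac12$ where the prefactors $\tfrac{1}{2\nu_i}$ of \eqref{eqn:Hxy} (restored later in \eqref{eqn:Hamred}) properly belong.
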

\begin{proof}
This is a simple consequence of the previous Lemma.
\end{proof}
Notice that $H$ can be considered as a partially reduced Hamiltonian function with two parameters
$p_{\theta_1}  =\mu_1$ and $p_{\theta_2} = \mu_2$ and two cyclic angles $\theta_1$ and $\theta_2$. 
This reduced Hamiltonian has 6 degrees of freedom and a 12-dimensional phase space $M^{12}$.

\begin{Lemma}
In the new  coordinates  the angular momentum $L$ satisfies 
\[
   M_\theta^t  L M_\theta = -B_{12} p_{\theta_1} - B_{34} p_{\theta_2} - B_{13} p_{\psi_1}  - B_{24} p_{\psi_2} 
   + \frac{1}{2 \sin\delta \sin\sigma } \left( B_{23} (u_1+u_2) + B_{14} (u_1-u_2)\right)
\]
where
\[
   u_1 = L_3 - \Sigma \cos \delta, \quad
   u_2 = L_3 - \Delta \cos \sigma
\]
and 
\[
   \sigma = \psi_1 + \psi_2, \quad
   \delta = \psi_1 - \psi_2, \quad
    \Sigma = p_{\theta_1} + p_{\theta_2}, \quad
   \Delta = p_{\theta_1} - p_{\theta_2}
\]
\end{Lemma}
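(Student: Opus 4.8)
The plan is to use that the angular momentum is an equivariant momentum map, so that it transforms by conjugation under the rotation $M$. Introducing the body-frame momenta $\tilde p_1 = (p_1,p_2,\alpha_1,\alpha_2)^t$ and $\tilde p_2 = (p_3,p_4,\alpha_3,\alpha_4)^t$, the transformation \eqref{eqn:sympl} reads $x_1 = M q_{12}$, $x_2 = M q_{34}$, $y_1 = M\tilde p_1$, $y_2 = M\tilde p_2$. Since $(Mu)\wedge(Mv) = M(u\wedge v)M^t$ for every $M \in SO(4)$, the lab-frame angular momentum factors as $L = M\,\ell\,M^t$ with the body-frame angular momentum
\[
   \ell = q_{12}\wedge\tilde p_1 + q_{34}\wedge\tilde p_2 .
\]
Because $M=M_\theta M_\psi$ is orthogonal this gives $M^t L M = \ell$, hence $M_\theta^t L M_\theta = M_\psi\,\ell\,M_\psi^t$. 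The statement therefore splits into two independent calculations: evaluating $\ell$ in the basis $B_{ij}$, and conjugating the outcome by $M_\psi$.

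First I would expand $\ell$ using $u\wedge v = uv^t - vu^t$, whose $B_{ij}$-coefficient is the minor $u_i v_j - u_j v_i$ summed over the two Jacobi pairs. Since the last two entries of $q_{12}$ and $q_{34}$ vanish, the $B_{34}$-coefficient is zero and the $B_{12}$-coefficient is exactly $L_3$. Substituting the $\alpha_i$ from the first Lemma and using $q_1 q_4 - q_2 q_3 = 2A$, the $B_{13}$- and $B_{24}$-coefficients collapse to $-p_{\psi_1}$ and $-p_{\psi_2}$, while the $B_{14}$- and $B_{23}$-coefficients are $-2AC$ and $-2AB$. Thus
\[
   \ell = L_3\,B_{12} - p_{\psi_1}B_{13} - 2AC\,B_{14} - 2AB\,B_{23} - p_{\psi_2}B_{24}.
\]
As $M_\psi=\exp(\psi_1 B_{13})\exp(\psi_2 B_{24})$ rotates only the $(1,3)$- and $(2,4)$-planes, it fixes $B_{13}$ and $B_{24}$, which already yields the $-p_{\psi_1}B_{13}-p_{\psi_2}B_{24}$ part of the claim.

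Next I would compute the conjugation through $M_\psi B_{ij}M_\psi^t = (M_\psi e_i)\wedge(M_\psi e_j)$, reading the images of the standard basis off $\exp(\psi_1 B_{13})$ and $\exp(\psi_2 B_{24})$ and re-expanding each wedge in the basis $B_{ij}$. Collecting the $B_{12}$- and $B_{34}$-coefficients, the pieces proportional to $L_3$ cancel by $\sin^2\psi_1 - \sin^2\psi_2 = \tfrac12(\cos 2\psi_2 - \cos 2\psi_1)$, and after inserting $B$ and $C$ the surviving momentum terms contract through $\cos^2+\sin^2=1$ to leave precisely $-p_{\theta_1}$ and $-p_{\theta_2}$. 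This is the mechanism by which the conserved momenta $p_{\theta_i}$ reappear as components of $M_\theta^t L M_\theta$, and it confirms four of the six coefficients.

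The main obstacle is the $B_{23}$- and $B_{14}$-coefficients, $L_3\sin\psi_1\cos\psi_2 - 2AC\sin\psi_1\sin\psi_2 - 2AB\cos\psi_1\cos\psi_2$ and the analogous expression. Here one substitutes the full $B,C$, cancels the common factor $2A$ against the numerators, and reduces the trigonometry. The decisive identities are $\cos 2\psi_1 - \cos 2\psi_2 = -2\sin\sigma\sin\delta$ in the denominator, together with the sum-to-product formulas $\sin 2\psi_1 \pm \sin 2\psi_2 = 2\sin(\psi_1\pm\psi_2)\cos(\psi_1\mp\psi_2)$ and $\cos\delta + \cos\sigma = 2\cos\psi_1\cos\psi_2$, $\cos\delta - \cos\sigma = 2\sin\psi_1\sin\psi_2$. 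I would organise the reduction by forming the symmetric and antisymmetric combinations of the two coefficients, which are exactly what multiply $B_{23}$ and $B_{14}$; these collapse, via the identities above, into the numerators written in the statement as $u_1+u_2$ and $u_1-u_2$ over the common denominator $2\sin\delta\sin\sigma$. I expect this purely trigonometric simplification, rather than any conceptual step, to be the genuinely laborious part of the proof; as a consistency check one can verify throughout that $\operatorname{tr}(L^2)$ and the Pfaffian are preserved, since $M_\theta^t L M_\theta$ must remain conjugate to $\ell$.
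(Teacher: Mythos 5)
Your proposal is correct and follows essentially the same route as the paper: invoke equivariance of the momentum map to write $M_\theta^t L M_\theta = M_\psi \hat L M_\psi^t$ with $\hat L = q_{12}\wedge p_{12} + q_{34}\wedge p_{34}$, expand $\hat L$ in the basis $B_{ij}$ (where the $B_{13}$- and $B_{24}$-coefficients collapse to $-p_{\psi_1}$, $-p_{\psi_2}$ and the $B_{14}$-, $B_{23}$-coefficients to $-2AC$, $-2AB$), and then carry out the conjugation by $M_\psi$. The paper compresses the final conjugation into one sentence, whereas you spell out the trigonometric mechanism (in particular the cancellation producing $-p_{\theta_1}$, $-p_{\theta_2}$), which is a correct elaboration rather than a different argument.
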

\begin{proof}
Note that for orthogonal $M$ we have
\[
   M x \wedge M y = Mx (M y)^t - My (Mx)^t = M ( x^t y - y^t x) M^t ,
\]
which says that the momentum map $L$ is equivariant with respect to the rotation given by $M$.
Using $M = M_\theta M_\psi$ then gives
\[
   M_\theta^t L M_\theta = M_\psi \hat L M_\psi^t , \quad \hat L =  q_{12} \wedge p_{12} +  q_{34} \wedge  p_{34} ,
\]
where $\hat L$ is the angular momentum tensor in the body frame defined by $M$.
Here $q_{ij}$ and $p_{ij}$ refer to the vectors on the right-hand sides of the relations in \eqref{eqn:sympl}.
It is straightforward to compute
\[
    \hat L = B_{12} L_3  
     + B_{13} (q_1 \alpha_1 +q_3 \alpha_3) + B_{14} (q_1 \alpha_2 + q_3 \alpha_4) 
    + B_{23}(q_2 \alpha_1 + q_4 \alpha_3) + B_{24} (q_2 \alpha_2 + q_4 \alpha_4) \,.
\]
Using the definitions of $\alpha_i$ the coefficient of $B_{13}$ reduces to $-p_{\psi_1}$
and the coefficient of $B_{24}$ reduces to $-p_{\psi_2}$. Conjugating $\hat L$ 
with $M_\psi$ gives the result.
\end{proof}

So far this is a partial reduction: We introduced two cyclic angles $\theta_1$ and $\theta_2$ 
with conjugate momenta that are now constants of motion. However, this is a reduction by 
two degrees of freedom only, i.e.{} to 6 degrees of freedom, but we would like to reduce by another two degrees of freedom (elimination of the nodes), 
so that the reduced system has 4 degrees of freedom. Here and subsequently, we assume the values $\mu_1$ and $\mu_2$ of the momenta $p_{\theta_1}$ and $p_{\theta_2}$, respectively, to be fixed and generic. 

Symplectic symmetry reduction in the abstract is described by a fundamental theorem by 
Marsden and Weinstein  \cite{MW74}.
According to that, one fixes a regular value $\mu$ of a momentum map which is 
supposed to be equivariant with respect to the symmetry group, and then takes 
the quotient of the corresponding level set of the momentum map by the isotropy subgroup of $\mu$. 
Provided that the isotropy subgroup acts 
%
acts freely and properly on that level set, the quotient defines the reduced symplectic manifold
with a unique symplectic form. 
For commutative groups both steps reduce by the same dimension. For the case of $SO(d)$ 
the isotropy group that fixes a generic element of $so(d)$ has dimension $[ d/2 ]$, which is the 
dimension of 
the number of real invariant 2-dimensional eigenspaces of $L$,
and corresponds, for $d=4$ to our two cyclic angles $\theta_1$ and $\theta_2$.
In the 3-body problem 
collinear configurations with zero angular momentum 
are fixed by a continuous group of rotations.
Hence, the value of the momentum map is not regular there, 
and we expect the corresponding reduced space to have 
a singularity there. 
%
In fact, the symplectic coordinates which we have introduced in the present paper  
are valid near generic planar configurations only.

\section{Restriction to  an 8-dimensional invariant subset}

We are now choosing a coordinate system in which $L$ has a particularly simple form, which is adapted to our choice of $M$. 
Let $L$  be equal to
\[
    L_0= \begin{pmatrix} 
        0 & \mu_1 & 0 & 0 \\
        -\mu_1 & 0 & 0 & 0 \\
        0 & 0 & 0 & \mu_2 \\
        0 & 0 & -\mu_2 & 0 
    \end{pmatrix} = \mu_1 B_{12} + \mu_2 B_{34}
\]
Notice that since $L_0$ is spanned by $B_{12}$ and $B_{34}$ and since 
these two matrices commute, we conclude that we have  $ M_\theta^t L_0  M_\theta = L_0$. 
Thus the two-parameter subgroup of matrices $M_\theta$ is the isotropy group that fixes $L_0$.
%
Our approach was inspired by the treatment of the case $d=3$ in Whittaker \cite{Whittaker37}.

\begin{Lemma} \label{lem4}
The subset $\mathcal{I} \subset M^{12}$ defined as the zero-level of the map $\mathcal{C} : M^{12} \to \R^4$ with components $c_i$ 
 defined by
\begin{equation} \label{eqn:Invset}
    c_1 = p_{\psi_1} , \quad c_2 = p_{\psi_2}, \quad 
    c_3 = \Sigma \cos \delta   - L_3, \quad c_4 =  \Delta  \cos \sigma - L_3
\end{equation}
is an invariant set with respect to the Hamiltonian flow of the partially reduced Hamiltonian \eqref{eqn:Hpsi} 
for fixed values 
\[
p_{\theta_1} = \mu_1, \quad 
p_{\theta_2} = \mu_2.
\]
Locally $\mathcal{I}$ is an 8-dimensional manifold almost everywhere (near any regular point with respect to the map $\mathcal{C}$). 
\end{Lemma}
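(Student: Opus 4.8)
The plan is to prove the two assertions separately: invariance of $\mathcal{I}$ under the flow, and its local $8$-dimensional manifold structure. The key preliminary observation, which I would make first, is a geometric reading of the $c_i$ in terms of the body-frame angular momentum $L_\theta := M_\theta^t L M_\theta$ computed in the Lemma preceding this one. Comparing the $c_i$ with the coefficients there, one sees that $c_1,c_2$ are (minus) the coefficients of $B_{13},B_{24}$, while $-c_3 = u_1$ and $-c_4 = u_2$ drive the coefficients of $B_{23},B_{14}$ via $(u_1\pm u_2)/(2\sin\delta\sin\sigma)$. Hence, away from the degenerate set $\sin\delta\sin\sigma = 0$, the locus $\mathcal{I}=\mathcal{C}^{-1}(0)$ is exactly the set where $L_\theta$ is block-diagonal, $L_\theta = -\mu_1 B_{12}-\mu_2 B_{34} = -L_0$; that is, where the four off-block components (the coefficients of $B_{13},B_{24},B_{23},B_{14}$) vanish.

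For invariance I would exploit that the space-frame momentum map $L$ is conserved along the flow of $H$, since $L$ is the momentum map of the $SO(4)$-symmetry of \eqref{eqn:Hxy}. Differentiating $L_\theta = M_\theta^t L M_\theta$ along a (lifted) trajectory and using $\dot L = 0$ together with $M_\theta^t\dot M_\theta = \dot\theta_1 B_{12}+\dot\theta_2 B_{34}=:\Omega$ (valid because the first two exponential factors of $M$ commute) yields the Lax-type identity
\[
   \dot L_\theta = [\,L_\theta,\Omega\,].
\]
On $\mathcal{I}$ the matrix $L_\theta = -L_0$ lies in the abelian subalgebra spanned by $B_{12},B_{34}$, which also contains $\Omega$, so $[L_\theta,\Omega]=0$ and hence $\dot L_\theta = 0$ there. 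Consequently the four off-block components of $L_\theta$ — which cut out $\mathcal{I}$ locally — have vanishing time derivative on $\mathcal{I}$, so $X_H$ is tangent to $\mathcal{I}$ and the set is invariant. (Equivalently one may verify $\{c_i,H\}|_\mathcal{I}=0$ directly; routing the argument through the components of $L_\theta$ avoids the bookkeeping produced by the singular prefactor $1/(2\sin\delta\sin\sigma)$.)

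For the manifold claim I would invoke the regular value theorem: near any point where $D\mathcal{C}$ has rank $4$, the set $\mathcal{C}^{-1}(0)$ is a submanifold of codimension $4$, hence of dimension $12-4=8$. Since $c_1=p_{\psi_1}$, $c_2=p_{\psi_2}$ contain momenta absent from $c_3,c_4$, and $c_3,c_4$ are independent of $p_{\psi_1},p_{\psi_2}$, the Jacobian is block-triangular, and its rank is $2$ plus the rank of the $(\psi_1,\psi_2)$-block of $(c_3,c_4)$. With $\Sigma=\mu_1+\mu_2$, $\Delta=\mu_1-\mu_2$ constant, a short computation gives the $2\times2$ minor proportional to $\Sigma\Delta\,\sin\delta\,\sin\sigma = (\mu_1^2-\mu_2^2)\sin\delta\,\sin\sigma$, which is nonzero whenever $\mu_1\neq\mu_2$ and $\sin\delta\,\sin\sigma\neq0$. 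Thus regular points are generic and almost everywhere the level set is $8$-dimensional.

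The hard part is conceptual rather than computational: it is recognizing that $\mathcal{I}$ is precisely the block-diagonal (normal-form) locus of $L_\theta$, so that invariance follows from the isospectral evolution $\dot L_\theta=[L_\theta,\Omega]$ collapsing to zero on the abelian subalgebra spanned by $B_{12},B_{34}$. Once this is in place both the tangency and the rank computation are routine; the only care required is to stay away from the degenerate set $\sin\delta\,\sin\sigma=0$ (and from $\mu_1=\mu_2$), where the coordinate chart of the first Lemma itself breaks down.
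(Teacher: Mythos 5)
Your proof is correct and follows essentially the same route as the paper: both identify $\mathcal{I}$ (at fixed $p_{\theta_i}=\mu_i$) with the locus where the body-frame angular momentum $M_\theta^t L M_\theta$ of the preceding Lemma takes the block-diagonal normal form spanned by $B_{12},B_{34}$, and both derive invariance from conservation of the momentum map $L$, your Lax-equation packaging $\dot L_\theta=[L_\theta,\Omega]$ being a cosmetic variant of the paper's ``impose $L=L_0$ on the conserved $L$''. Your explicit rank computation, with $\det\bigl(\partial(c_3,c_4)/\partial(\psi_1,\psi_2)\bigr)\propto(\mu_1^2-\mu_2^2)\sin\delta\sin\sigma$, supplies detail the paper leaves implicit and correctly locates the degenerate set.
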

\begin{proof}
Since $L$ is constant we can impose $L = L_0$. Combined with the previous 
Lemma on $L$ this implies that $p_{\theta_1} = \mu_1$, $p_{\theta_2} = \mu_2$, 
and four additional equations that together determine 
the six components of $L$. After a little bit of algebra we see that these 
imply $c_i = 0$ as stated in the Lemma. Of course it is also possible to check that 
the set $\mathcal{I}$ is invariant by directly using the Hamiltonian vector field corresponding to the partially reduced Hamiltonian
$H$ in \eqref{eqn:Hpsi}.
\end{proof}

We next show that it is possible to obtain the reduced Hamiltonian (near any regular point of the invariant set $\mathcal{I}$) by simply restricting the Hamiltonian $H$ in \eqref{eqn:Hpsi} to $\mathcal{I}$. This is a consequence of the following general Theorem.

\begin{Theorem}
Let $(M,\omega)$ be a smooth, 2d-dimensional symplectic manifold equipped with some symplectic form $\omega$, and let $N = \mathcal{C}^{-1}(c) \subset M$ be a smooth, (2d - k)-dimensional submanifold ($k, d\in \mathbb N$, $0 < k < d$), where $c \in \R^k$ is a regular value of the smooth map 
\[
\mathcal{C} : M \to \R^k; \; z  \mapsto \left( \begin{array}{c} c_1(z) \\ \vdots \\ c_k(z) \end{array}
\right) .
\]
Furthermore, assume the square matrix 
\[
\mathcal{A}(z) := \big( \omega (z) (X_{c_i}(z), X_{c_j}(z)) \big)_{i,j = 1,  ...  ,k}
\]
to be regular for all $z \in N$. Here, for any smooth function $g $, $X_g : M \to TM$ denotes the corresponding Hamiltonian vector field which is uniquely defined by
\[
\omega(z)(X_g(z), w) = dg(z)w
\]
for all $z \in M$, $w \in T_{z}M$. Then $(N,\omega_{|N})$ is  a symplectic submanifold of $(M,\omega)$. Note, accordingly, $k$ is even. 

In addition, consider a Hamiltonian dynamical system 
\begin{equation} \label{eqn:Haml}
\dot{x} = X_H(z), \; z \in M
\end{equation}
corresponding to some smooth Hamiltonian $H $, and suppose that $N$ is invariant under the flow of that system, i.e.
\[
X_H(z) \in T_{z}N
\]
for all $z  \in N$. Then 
\[
(X_H)_{|N} = X_{H_{|N} }
\]
is the Hamiltonian vector field of the reduced (restricted) system on $(N,\omega_{|N})$. 
\end{Theorem}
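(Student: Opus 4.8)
The plan is to reduce the symplectic-submanifold claim to a pointwise statement on each tangent space $T_z M$ and then feed in the hypothesis that $\mathcal{A}$ is regular. First I would rewrite the tangent space of $N$ as a symplectic orthogonal complement. Since the defining relation $\omega(z)(X_{c_i}(z), w) = dc_i(z) w$ holds for every $w$, the tangent space $T_z N = \bigcap_{i=1}^k \ker dc_i(z)$ is precisely $W_z^{\omega}$, the $\omega$-orthogonal complement of $W_z := \mathrm{span}\{X_{c_1}(z), \dots, X_{c_k}(z)\}$. This translation is the conceptual crux: it turns the constraint geometry into symplectic linear algebra.

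Next I would extract two facts from the regularity of $\mathcal{A}(z)$. Any linear dependence among the $X_{c_i}(z)$ would produce a vanishing linear combination of the rows of $\mathcal{A}(z)$, so regularity forces the $X_{c_i}(z)$ to be linearly independent and hence $\dim W_z = k$. Moreover $\mathcal{A}(z)$ is exactly the Gram matrix of $\omega|_{W_z}$ in the basis $X_{c_i}(z)$, so its invertibility says precisely that $\omega|_{W_z}$ is nondegenerate, i.e.\ $W_z$ is a symplectic subspace of $T_z M$.

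The heart of the argument is then the standard linear-symplectic lemma: for a symplectic vector space $(V,\omega)$, a subspace $W$ is symplectic if and only if $V = W \oplus W^{\omega}$, and in that case $W^{\omega}$ is symplectic as well. Applying this with $V = T_z M$ and $W = W_z$ shows that $T_z N = W_z^{\omega}$ carries a nondegenerate form $\omega_{|N}$, so $(N, \omega_{|N})$ is a symplectic submanifold; since a symplectic space has even dimension and $\dim T_z N = 2d - k$, this simultaneously gives that $k$ is even. I expect this to be the only genuine obstacle, and even it amounts to assembling the standard lemma rather than a real computation — everything else is bookkeeping.

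For the dynamical statement I would argue directly from the defining property of the Hamiltonian vector field, using flow-invariance. Writing $\iota : N \hookrightarrow M$ for the inclusion, for $z \in N$ and any $w \in T_z N$ one has $d(H_{|N})(z) w = dH(z) w = \omega(z)(X_H(z), w)$. Because $N$ is invariant, $X_H(z) \in T_z N$, so for $w \in T_z N$ this right-hand side equals $\omega_{|N}(z)\big((X_H)_{|N}(z), w\big)$. Thus $(X_H)_{|N}(z)$ satisfies the defining equation for the Hamiltonian vector field of $H_{|N}$ on $(N, \omega_{|N})$, and since $\omega_{|N}$ is nondegenerate by the first part, this determines $(X_H)_{|N} = X_{H_{|N}}$ uniquely. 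No further estimates are required.
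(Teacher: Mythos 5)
Your argument is correct and follows essentially the same route as the paper: identify $T_zN$ with the $\omega$-orthogonal complement of $V_z=\mathrm{span}\{X_{c_1}(z),\dots,X_{c_k}(z)\}$ via the inclusion $T_zN\subset V_z^{\omega}$ plus a dimension count, use the regularity of $\mathcal{A}(z)$ (read as the Gram matrix of $\omega$ on $V_z$) to conclude that $V_z$ and hence $V_z^{\omega}=T_zN$ are symplectic subspaces, and then obtain $(X_H)_{|N}=X_{H_{|N}}$ from the defining relation of Hamiltonian vector fields together with invariance of $N$ and nondegeneracy of $\omega_{|N}$. The only point the paper records that you omit is the (trivial) closedness of the restricted form, $d(\omega_{|N})=(d\omega)_{|N}=0$, which is needed to call $\omega_{|N}$ a symplectic form.
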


\begin{proof} 
To prove that $(N, \omega_{|N})$ is a symplectic submanifold of $(M,\omega )$ we first show that $\omega_{|N}$ is a symplectic form on $N$. In fact,
\[
V_z := span \{X_{c_1}, ... , X_{c_k} \} \subset T_{z}M
\]
equipped with the symplectic form $\omega(z)_{|V_z}$ defines a k-dimensional symplectic subspace of the symplectic vector space $(T_{z}M, \omega(z))$ for all $z \in N$. This is a straightforward consequence of the regularity of the matrix $\mathcal{A}(z)$. In turn, the $\omega(z)$-orthogonal, complementary subspace of $V_z$ in $T_{z}M$,
\[
{V_{z}}^{\omega} := \{v \in T_{z}M | \, \omega(z)(v,w) = 0 \;  \text{ for all } \;w \in V_z \}
\]
is symplectic, too; $T_{z}M = V_z \oplus {V_{z}}^{\omega}$. But $V_z \subset (T_{z}N)^{\omega}$, since by definition
\[
\omega(z)(X_{c_i}(z), w) = d{c_i}(z)w =0
\] for all $i$ and all $w \in T_{z}N$. So, $T_{z}N \subset  {V_{z}}^{\omega}$. Moreover,
\[
dim \, T_{z}N = dim \, TM - k = dim \, TM - dim \, V_z = dim \, {V_{z}}^{\omega},
\]
i.e. $T_{z}N = {V_{z}}^{\omega}$ is a symplectic subspace of $(T_{z}M, \omega(z))$ for all $z \in N$. In particular, this implies that $\omega_{|{T_{z}N}}$ is a symplectic form on $T_{z}N$ for all $z \in N$.  Also, $\omega_{|N}$ is a closed differential form on the manifold $N$, since $d(\omega_{|N}) = (d\omega)_{|N} = 0$. Therefore, $\omega_{|N}$ satisfies all the axioms to be fulfilled for a symplectic form on $N$. 
\\

Now, we proceed to prove the second assertion of Theorem 1. Since $(N, \omega_{|N})$ is a symplectic manifold, there exists a unique vector field $X_{H_{|N}}$ on $N$ such that 
\[
\omega(z) (X_{H_{|N}}(z),w) = d(H_{|N})(z)w
\] 
for all $z \in N$ and all $w \in T_{z}N = {V_z}^{\omega}$. But we also have 
\[
\omega(z)(X_{H}(z),w) = dH(z)w = d(H_{|N})(z)w
\]
for all $z \in N$ and all $w \in T_{z}N = {V_z}^{\omega} \subset T_{z}M$, where $X_{H}(z) \in T_{z}N$ holds by the invariance property assumed to be satisfied for $N$. Therefore, uniqueness of the vector field $X_{H_{|N}}$ on $N$ implies $(X_H)_{|N} = X_{H_{|N}}$. 

\end{proof}

So, in order for the invariant set $\mathcal{I}$ described in Lemma~\ref{lem4} to be a symplectic submanifold almost everywhere, 
we need to check that  the  matrix $\mathcal{A}(z)$ defined in the previous Theorem is non-singular.
There are 4 non-zero entries.
The determinant of the matrix  is 
\[
     \frac{(\Sigma + L_3 \cos \delta)(\Delta + L_3 \cos\sigma)}{\sin^4\sigma \sin^4\delta}
\]
Restricted to the invariant set this simply becomes $\Delta^2 \Sigma^2 = (\mu_1^2 - \mu_2^2)^2$ and hence 
is non-vanishing as long as $|\mu_1| \not = |\mu_2|$. In fact, even though the matrix $\mathcal{A}(z)$
is somewhat complicated, when restricted to the invariant set $\mathcal{I}$ the only non-zero entries 
are $\pm p_{\theta_i}$.

\begin{Lemma} \label{lem5}
The functions $\tilde f$ in  \eqref{eqn:Hpsi} restricted to the invariant set $\mathcal{I}$ defined in  \eqref{eqn:Invset}
are given by  
\[
     \tilde f|_\mathcal{I} = f(q_i, q_j, L_3)  = \frac{1}{16 A^2} \left(  (L_d + L_s)^2 q_i^2 + ( L_d - L_s)^2 q_j^2 \right) \\
\]
where $L_d$ and $L_s$ are functions of $L_3$ and the constants 
$\Delta = \mu_1 - \mu_2$ 
 and 
 $\Sigma =  \mu_1 + \mu_2$:
\[
   L_d^2 = \Delta^2 - L_3^2, \quad
   L_s^2 = \Sigma^2 - L_3^2
\,.
\]
\end{Lemma}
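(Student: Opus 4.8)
The plan is to restrict the two terms $\tilde f(q_i,q_j)$ appearing in \eqref{eqn:Hpsi} to $\mathcal{I}$ and simplify. The first and easiest reduction uses the constraints $c_1 = p_{\psi_1} = 0$ and $c_2 = p_{\psi_2} = 0$: these annihilate the $p_{\psi_1}$- and $p_{\psi_2}$-dependent pieces of $\tilde f$, so that on $\mathcal{I}$ we simply have $\tilde f|_\mathcal{I} = B^2 q_i^2 + C^2 q_j^2$, with $B$ and $C$ evaluated along the constraint surface. Since the claimed $f$ is diagonal in $q_i^2, q_j^2$ with coefficients $(L_d+L_s)^2/(16A^2)$ and $(L_d-L_s)^2/(16A^2)$, the whole lemma reduces to establishing, term by term, that $4AB = \pm(L_d+L_s)$ and $4AC = \pm(L_d-L_s)$ on $\mathcal{I}$; squaring then gives the stated prefactor $1/(16A^2)$ and the overall signs are irrelevant.

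Next I would pass everything to the sum/difference angles $\sigma = \psi_1+\psi_2$ and $\delta = \psi_1-\psi_2$, using $2\psi_1 = \sigma+\delta$ and $2\psi_2 = \sigma-\delta$. The denominator common to $B$ and $C$ becomes $2A(\cos 2\psi_1 - \cos 2\psi_2) = -4A\sin\sigma\sin\delta$. For the numerators I would apply the product-to-sum identities to the terms $\sin\psi_1\cos\psi_2$ and $\cos\psi_1\sin\psi_2$, so that the $\mu_i$-weighted contributions collect into $\Sigma\sin\sigma \pm \Delta\sin\delta$ with $\Sigma = \mu_1+\mu_2$, $\Delta = \mu_1-\mu_2$, and expand $\sin(\sigma\pm\delta)$ in the $L_3$-term. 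This recasts $B$ and $C$ purely as functions of $\sigma,\delta,L_3,\Sigma,\Delta$ and $A$.

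The decisive step is then to invoke the remaining constraints $c_3 = c_4 = 0$, which tie $\cos\delta$ and $\cos\sigma$ to $L_3$ (proportional, with the sign dictated by the chosen normal form $L_0$) and hence give $\sin\delta = L_s/\Sigma$, $\sin\sigma = L_d/\Delta$ with $L_s^2 = \Sigma^2 - L_3^2$ and $L_d^2 = \Delta^2 - L_3^2$. Substituting these into both the $L_3\sin 2\psi_i$ cross term and the $\Sigma\sin\sigma \pm \Delta\sin\delta$ pieces, the crucial point is that the cross term contributes exactly the $L_3^2$-correction needed so that each numerator reorganizes into $\sin\sigma\,(\Sigma^2 - L_3^2)/\Sigma$ and $\sin\delta\,(\Delta^2 - L_3^2)/\Delta$, i.e. into the factors $L_s^2$ and $L_d^2$. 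One then factors out $\sin\sigma\sin\delta = L_dL_s/(\Delta\Sigma)$, which cancels against the same factor in the denominator, leaving precisely $B = \mp(L_d+L_s)/(4A)$ and $C = \pm(L_d-L_s)/(4A)$.

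I expect the main obstacle to be exactly this cancellation bookkeeping: one must keep track of the correct branch/sign of $\sin\sigma$, $\sin\delta$ and of $L_3$ so that the cross term $L_3\sin 2\psi_i$ combines with $\Sigma\sin\sigma \pm \Delta\sin\delta$ to produce $\Sigma^2 - L_3^2$ and $\Delta^2 - L_3^2$, rather than the spurious $\Sigma^2 + L_3^2$-type expressions that arise with the opposite sign convention. Once the substitution of $c_3 = c_4 = 0$ is organized so that this sign is correct, the factorization into $(L_d \pm L_s)$ is forced and the remaining algebra is routine; because only $B^2$ and $C^2$ enter $\tilde f|_\mathcal{I}$, the undetermined overall signs play no role in the final formula for $f$.
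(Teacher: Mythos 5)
Your proposal follows exactly the paper's route: set $p_{\psi_1}=p_{\psi_2}=0$ so that $\tilde f|_{\mathcal I}=B^2q_i^2+C^2q_j^2$, rewrite $B$ and $C$ in the variables $\sigma=\psi_1+\psi_2$, $\delta=\psi_1-\psi_2$ (the paper records these as $B,C = \mp\bigl((L_3\cos\sigma+\Delta)/\sin\sigma \pm (L_3\cos\delta+\Sigma)/\sin\delta\bigr)/(4A)$), and then substitute the constraints $c_3=c_4=0$ to collapse each bracket to $L_s$ and $L_d$. Your write-up is correct and in fact supplies the cancellation and sign bookkeeping that the paper's one-line proof leaves implicit.
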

\begin{proof}
Setting $p_{\psi_i} = 0$ and rewriting $f$ in terms of $\sigma$ and $\delta$ gives
\begin{align*}
     B &= -\frac{ (L_3 \cos\sigma + \Delta)/\sin\sigma + ( L_3 \cos\delta + \Sigma)/\sin\delta }{4A }, \\
     C &= \frac{ (L_3 \cos\sigma + \Delta)/\sin\sigma - ( L_3 \cos\delta + \Sigma)/\sin\delta }{4A } \,.
\end{align*}
Now inserting the definitions of the invariant set \eqref{eqn:Invset} reduces $\tilde f$ to $f$.
\end{proof}

This leads us to our first main result.
\begin{Theorem}
The fully reduced Hamiltonian of the 3-body problem in 4-dimensional space 
with constant angular momentum matrix with eigenvalues $\pm i \mu_1, \pm i \mu_2$ , $\mu_1 > \mu_2 \ge 0$
in local coordinates $q_i, p_i$, $i = 1, \dots, 4$ assuming $A = \tfrac12 ( q_1 q_4 - q_2 q_3) \not = 0$ 
is given by 
\begin{equation} \label{eqn:Hamred}
H = \frac{1}{2\nu_1} ( p_1^2 + p_2^2 +  f(q_3, q_4) ) + \frac{1}{2\nu_2}( p_3^2 + p_4^2 +  f(q_1, q_2)) + V( q_1^2+q_2^2, q_3^2 + q_4^2, q_1 q_3 + q_2 q_4)
\end{equation}
where $f$ is defined in Lemma~\ref{lem5}. 
\end{Theorem}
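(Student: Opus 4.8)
The plan is to obtain the stated Hamiltonian \eqref{eqn:Hamred} as an application of the general restriction Theorem (Theorem~1) to the invariant set $\mathcal{I}$, with the hard analytic work already carried out in Lemmas~\ref{lem4} and~\ref{lem5}. First I would verify that the hypotheses of Theorem~1 hold on $\mathcal{I} \subset M^{12}$, taking $2d = 12$, $d = 6$ and $k = 4$ (so $0 < k < d$ and $k$ is even): the map $\mathcal{C}$ of \eqref{eqn:Invset} has $0$ as a regular value away from the loci where $A$, $\sin\sigma$ or $\sin\delta$ vanish; the Poisson matrix $\mathcal{A}(z)$ restricts on $\mathcal{I}$ to determinant $(\mu_1^2 - \mu_2^2)^2$, which is nonzero precisely because $\mu_1 > \mu_2 \ge 0$; and $\mathcal{I}$ is flow-invariant by Lemma~\ref{lem4}. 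Theorem~1 then delivers in one stroke that $(\mathcal{I}, \omega|_{\mathcal{I}})$ is a symplectic submanifold and that $(X_H)|_{\mathcal{I}} = X_{H|_{\mathcal{I}}}$. Hence the reduced dynamics is Hamiltonian, with Hamiltonian the restriction $H|_{\mathcal{I}}$.

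The content then reduces to expressing $\omega|_{\mathcal{I}}$ and $H|_{\mathcal{I}}$ in the coordinates $q_i, p_i$. I would first confirm that $(q_1, \dots, q_4, p_1, \dots, p_4)$ are honest local coordinates on the $8$-dimensional $\mathcal{I}$. The constraints $c_1 = c_2 = 0$ set $p_{\psi_1} = p_{\psi_2} = 0$, while $c_3 = c_4 = 0$ read $\cos\delta = L_3/\Sigma$ and $\cos\sigma = L_3/\Delta$ with $L_3 = q_1 p_2 - q_2 p_1 + q_3 p_4 - q_4 p_3$; since $\Sigma$ and $\Delta$ are the fixed constants $\mu_1 \pm \mu_2$, these solve for $\sigma = \psi_1 + \psi_2$ and $\delta = \psi_1 - \psi_2$ as functions of $(q,p)$ wherever $\Sigma\sin\delta \neq 0$ and $\Delta\sin\sigma \neq 0$. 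Differentiating, $d\psi_1$ and $d\psi_2$ become combinations of $dq_i, dp_i$ on $T\mathcal{I}$, so the eight differentials $dq_i, dp_i$ are independent there and the claim follows.

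The step I expect to carry the real weight is identifying $\omega|_{\mathcal{I}}$. Writing the canonical form on $M^{12}$ as $\omega = \sum_{i=1}^4 dp_i \wedge dq_i + dp_{\psi_1} \wedge d\psi_1 + dp_{\psi_2} \wedge d\psi_2$ and pulling it back along the inclusion $\mathcal{I} \hookrightarrow M^{12}$, the decisive observation is that $p_{\psi_1}$ and $p_{\psi_2}$ vanish identically on $\mathcal{I}$; hence their differentials pull back to zero and the two $\psi$-blocks drop out. Therefore $\omega|_{\mathcal{I}} = \sum_{i=1}^4 dp_i \wedge dq_i$, i.e. $q_i, p_i$ are Darboux coordinates on $\mathcal{I}$ and no symplectic correction terms survive. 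This is consistent with the abstract conclusion of Theorem~1 that $\omega|_{\mathcal{I}}$ is symplectic, the standard form being the simplest such outcome.

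Finally I would evaluate $H|_{\mathcal{I}}$ term by term, starting from \eqref{eqn:Hpsi} with the reduced masses restored: because $M$ is orthogonal the kinetic energy of \eqref{eqn:Hxy} splits as $\|y_1\|^2 = p_1^2 + p_2^2 + \tilde f(q_3, q_4)$ and $\|y_2\|^2 = p_3^2 + p_4^2 + \tilde f(q_1, q_2)$, so the constant prefactors $1/(2\nu_1)$ and $1/(2\nu_2)$ ride along untouched. The potential $V$ and the terms $p_1^2, \dots, p_4^2$ are unaffected by the restriction, while Lemma~\ref{lem5} replaces $\tilde f(q_3, q_4)$ and $\tilde f(q_1, q_2)$ by $f(q_3, q_4, L_3)$ and $f(q_1, q_2, L_3)$, with $L_3$ now read as the function of $(q,p)$ above. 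Collecting these terms yields precisely \eqref{eqn:Hamred}, completing the argument.
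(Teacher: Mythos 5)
Your proposal is correct and follows essentially the same route as the paper, which proves this theorem implicitly by combining Lemma~\ref{lem4}, the general restriction Theorem, the nonvanishing of $\det\mathcal{A}|_{\mathcal{I}} = (\mu_1^2-\mu_2^2)^2$, and Lemma~\ref{lem5}. The only addition is that you spell out why $(q_i,p_i)$ are Darboux coordinates on $\mathcal{I}$ (the pullbacks of $dp_{\psi_1}$, $dp_{\psi_2}$ vanish since $p_{\psi_i}\equiv 0$ there), a detail the paper leaves implicit but which is needed for the phrase ``local symplectic coordinates'' to carry its intended meaning.
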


Note that the old momenta are linear in the new momenta, and hence the kinetic energy is 
homogeneous of degree 2 in momenta.
After restricting to the invariant set, however, the kinetic energy is not polynomial
in the momenta, even though it still is homogeneous 
of degree 2 in momenta (including the constant momenta $p_{\theta_i}$).

We can introduce polar coordinates in the plane with vectors
$(q_1, q_2)$ and $(q_3, q_4)$. The momentum conjugate to the corresponding angle 
will be the angular momentum $L_3$. However, the terms in $\tilde f$ are
not rotationally symmetric, so this angle will not be cyclic. Introducing this 
angle would make explicit the separation into shape coordinates and 
orientation coordinates.

\section{Effective Potential}

According to Smale's program \cite{smale70two} finding relative equilibria is reduced 
to finding critical points of an effective potential after reduction. 
It is interesting to note that for $d=2,3$ this approach works nicely, since 
the Hamiltonian is quadratic in momenta, and additional terms can be 
considered to be part of the potential. Linear terms in momenta can be 
considered as effective magnetic fields.
For $d=4$ the reduced Hamiltonian is, however, not quadratic in momenta,
and thus defining an effective potential in the usual way is not possible. 
However, we are interested in relative equilibria with vanishing $p$.



\begin{Lemma} \label{lem:Veff}
For relative equilibria with $p_i = 0$ , $i=1,\dots, 4$ define 
\[
   f(q_i, q_j, L_3) = c_0 ( q_i, q_j) + c_2(q_i, q_j) L_3^2 + O(L_3^4) \,,
\]
such that the Hamiltonian to leading order in $p$  is
\[
  H = K_{\eff} + V_{\eff} + O(p^4)
\]
where 
\begin{align*}
   K_{\eff} &= \frac{1}{2\nu_1}(p_1^2 + p_2^2 + c_2(q_3, q_4) L_3^2) + \frac{1}{2\nu_2}(p_3^2 + p_4^2 + c_2(q_1, q_2) L_3^2), \\
   V_{\eff} &= V(q_1^2+q_2^2, q_3^2+q_4^2, q_1 q_3 + q_2 q_4) +  \frac{1}{2\nu_1}c_0(q_3, q_4)  + \frac{1}{2\nu_2}c_0(q_1, q_2) \,.
\end{align*}
Then critical points of $V_{\eff}$ are relative equilibria of $H$. If in addition   $ \frac{1}{2\nu_1}  c_2(q_3, q_4)   + \frac{1}{2\nu_2} c_2(q_1, q_2)  > 0$
at the critical point, and the critical point is a minimum of $V_\eff$, then it is a minimum of $H$. 
\end{Lemma}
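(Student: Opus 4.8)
The plan is to exploit the very controlled way in which the momenta $p=(p_1,\dots,p_4)$ enter the reduced Hamiltonian \eqref{eqn:Hamred}: they appear in the functions $f$ only through $L_3 = q_1 p_2 - q_2 p_1 + q_3 p_4 - q_4 p_3$, which is \emph{linear} in $p$ and vanishes at $p=0$. Since $f(q_i,q_j,L_3) = c_0(q_i,q_j) + c_2(q_i,q_j)L_3^2 + O(L_3^4)$ is even in $L_3$, substituting this expansion into \eqref{eqn:Hamred} and using $L_3 = O(p)$ produces the stated splitting $H = K_{\eff} + V_{\eff} + O(p^4)$ directly. Two features of this splitting drive the whole argument: $K_{\eff}$ contains no $p$-independent terms, so that $H(q,0) = V_{\eff}(q)$, and the remainder $R := H - K_{\eff} - V_{\eff}$ is $O(p^4)$.

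First I would show that the relative equilibria with $p=0$ are exactly the critical points of $V_{\eff}$, i.e. the points where $X_H$ vanishes on the slice $p=0$. For the momentum derivatives, $\partial f/\partial L_3 = 2c_2 L_3 + O(L_3^3)$ vanishes at $L_3 = 0$, so every term of $\partial H/\partial p_i$ carries either an explicit factor $p$ or a factor $\partial f/\partial L_3$; hence $\partial H/\partial p_i|_{p=0} = 0$ identically. For the configuration derivatives, $H(q,0) = V_{\eff}(q)$ gives $\partial H/\partial q_i|_{p=0} = \partial V_{\eff}/\partial q_i$. Therefore $X_H(q^*,0) = 0$ if and only if $dV_{\eff}(q^*) = 0$, which is the first assertion.

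For the minimum statement I would argue directly on the difference $H(q,p) - H(q^*,0) = K_{\eff}(q,p) + \big(V_{\eff}(q) - V_{\eff}(q^*)\big) + R(q,p)$, rather than through the Hessian, so that possibly non-strict minima of $V_{\eff}$ are covered. The middle bracket is $\ge 0$ near $q^*$ by hypothesis. The crucial point is that $K_{\eff}$, regarded as a quadratic form in $p$, is positive definite: the diagonal part $\frac{1}{2\nu_1}(p_1^2+p_2^2) + \frac{1}{2\nu_2}(p_3^2+p_4^2)$ is already positive definite since $\nu_1,\nu_2>0$, while the remaining, only rank-one, contribution equals $\big(\frac{1}{2\nu_1}c_2(q_3,q_4) + \frac{1}{2\nu_2}c_2(q_1,q_2)\big)L_3^2$, whose coefficient is positive at $q^*$ by assumption and hence positive on a whole neighbourhood by continuity. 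Consequently $K_{\eff}(q,p) \ge c\,|p|^2$ for some $c>0$, uniformly for $q$ near $q^*$.

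It then remains to dominate the remainder. Since $R$ is smooth and vanishes to fourth order in $p$ — here one uses $A(q^*)\neq 0$, so that $f$ and its $L_3$-expansion stay regular nearby — there is $C>0$ with $|R(q,p)| \le C|p|^4$ on a compact neighbourhood of $q^*$. Combining the estimates,
\[
   H(q,p) - H(q^*,0) \;\ge\; \big(V_{\eff}(q)-V_{\eff}(q^*)\big) + |p|^2\big(c - C|p|^2\big),
\]
which is $\ge 0$ for $|p|^2 \le c/(2C)$ and strictly positive away from $(q^*,0)$ whenever the minimum of $V_{\eff}$ is strict, yielding the claimed (local) minimum of $H$. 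I expect the main obstacle to lie precisely in this last interplay: the quadratic lower bound on $K_{\eff}$ must beat the quartic remainder on one \emph{common} neighbourhood, which is why the positivity hypothesis (needed to make the rank-one $L_3^2$-term harmless and keep $K_{\eff}$ definite) and the regularity $A\neq 0$ (needed to bound $R$) both enter.
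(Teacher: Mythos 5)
Your proposal is correct, and the first half (identifying relative equilibria with critical points of $V_{\eff}$) is essentially the paper's argument: $\partial H/\partial p_i$ vanishes identically at $p=0$ because $f$ is even in $L_3$ and $L_3$ is linear in $p$, while $H(q,0)=V_{\eff}(q)$ reduces the $q$-conditions to $dV_{\eff}=0$. For the minimum claim your route differs in a small but genuine way. The paper argues via the second-derivative test: it notes that $K_{\eff}$ is positive definite as a quadratic form in $p$ (sum of the positive definite diagonal part and the positive semi-definite rank-one $L_3^2$ term), assumes $V_{\eff}$ has a \emph{non-degenerate} strict minimum, and dismisses the remainder with the observation that the $O(p^4)$ terms vanish at the equilibrium and hence do not enter the Hessian. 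You instead bound $H(q,p)-H(q^*,0)$ from below directly, using a uniform coercivity estimate $K_{\eff}\ge c\,|p|^2$ near $q^*$ together with a uniform bound $|R|\le C|p|^4$ (justified by smoothness and evenness of $f$ in $L_3$ where $A\neq 0$ and $|L_3|<\mu_1-\mu_2$). This buys you two things: the conclusion holds for possibly degenerate minima of $V_{\eff}$, matching the weaker hypothesis actually stated in the Lemma, and the domination of the quartic remainder by the quadratic kinetic part on a common neighbourhood is made explicit rather than left implicit in a Hessian computation. The paper's version is shorter and suffices for the non-degenerate minima actually used later; yours is marginally more general and more self-contained.
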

\begin{proof}
By construction $V_\eff$ is independent of $p$ and $\partial K_\eff /\partial p_i$ vanishes for $p_i = 0$.
Moreover $\partial K_\eff / \partial q_i $ vanishes at $p_i = 0$. Thus the remaining condition for a 
critical point is $\partial V_\eff / \partial q_i = 0$. 
The fact that there are no linear terms in momenta in $K_\eff$ follows from the fact that $f$ is an even function in $L_3$.
The second statement is about the positivity of $K_\eff$ 
as a quadratic form in $p$. If the two additional terms involving $c_2$ are positive definite in $p$
then $K_\eff$ is positive definite as a whole, since the sum of positive definite matrices is again 
positive definite. Thus if $V_\eff$ has a non-degenerate strict minimum at $q^*$ then $(q,p) = (q^*,0)$ is a minimum of 
$H$, since higher order terms corresponding to $O(p^4)$ all vanish at the equilibrium.
\end{proof}

The function $c_0$ is the constant term of the Taylor expansion $c_0(q_i, q_j) = f(q_i, q_j, 0)$.
Define  
the (effective) moments of inertia 
\[
    I_1^{-1} =  \frac{q_1^2 /\nu_2 + q_3^2 / \nu_1}{4 A^2} , \quad 
    I_2^{-1} =  \frac{q_2^2 /\nu_2 + q_4^2 / \nu_1}{4 A^2}  \,. 
\]
Then the effective potential can be written as
\begin{equation} \label{eqn:Veff}
   V_\eff  
= \tfrac12 (  \mu_1^2 I_1^{-1} + \mu_2^2 I_2^{-1}) + V \,.
\end{equation}
The additional terms in $K_\eff$ proportional to $L_3^2$ are obtained from the Taylor series of $f$ as
\[
     \frac{L_3^3}{2( \mu_1^2 - \mu_2^2)} ( -  \mu_1^2 I_1^{-1} +  \mu_2^2 I_2^{-1}) \,.
\]

It appears as if in the limit $\mu_2 \to 0$ we never have a positive definite $K_\eff$. However,
notice that for small $\mu_2$ the orders in $\mu_2$ of the various $q_i$ are different, in particular $q_4$ is of order 1
while $q_1$ is of order $\mu_2^2$, and $q_2$ and $q_3$ are negligibly small. 
Also notice that the condition stated in the Lemma is sufficient for the definitness of $K_\eff$, but not necessary.



Finally let us remark that up to this point we have not assumed any particular form of the potential, other 
than that it depends on $x_i \cdot x_j$ only. From now (with the exception of Lemma~\ref{genequcond})
we will treat the Newtonian case only.

\section{Equilibria of the reduced Hamiltonian for two equal masses}

\begin{figure}
\includegraphics[width=5cm]{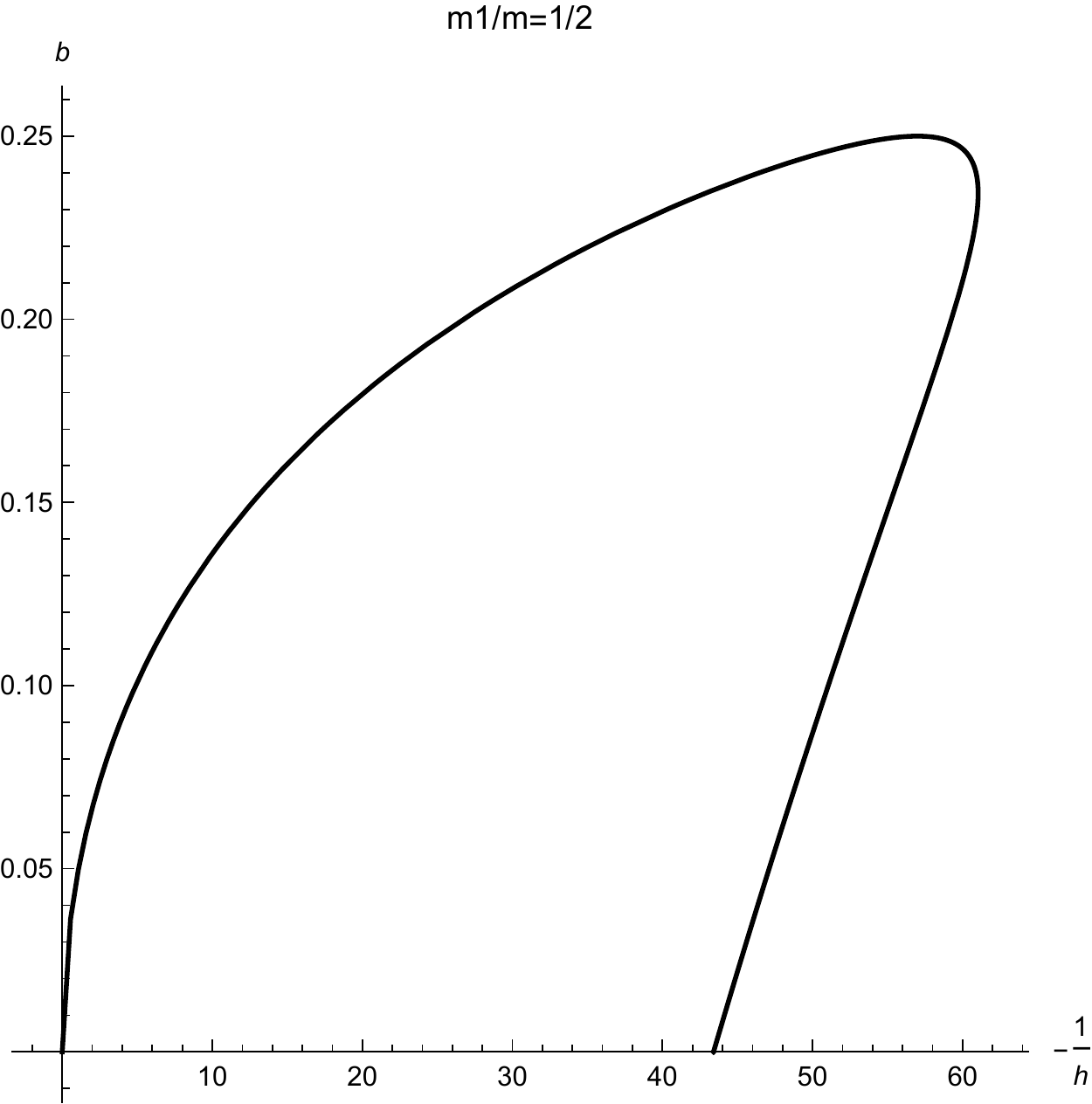}
\hspace{1cm}
\includegraphics[width=5cm]{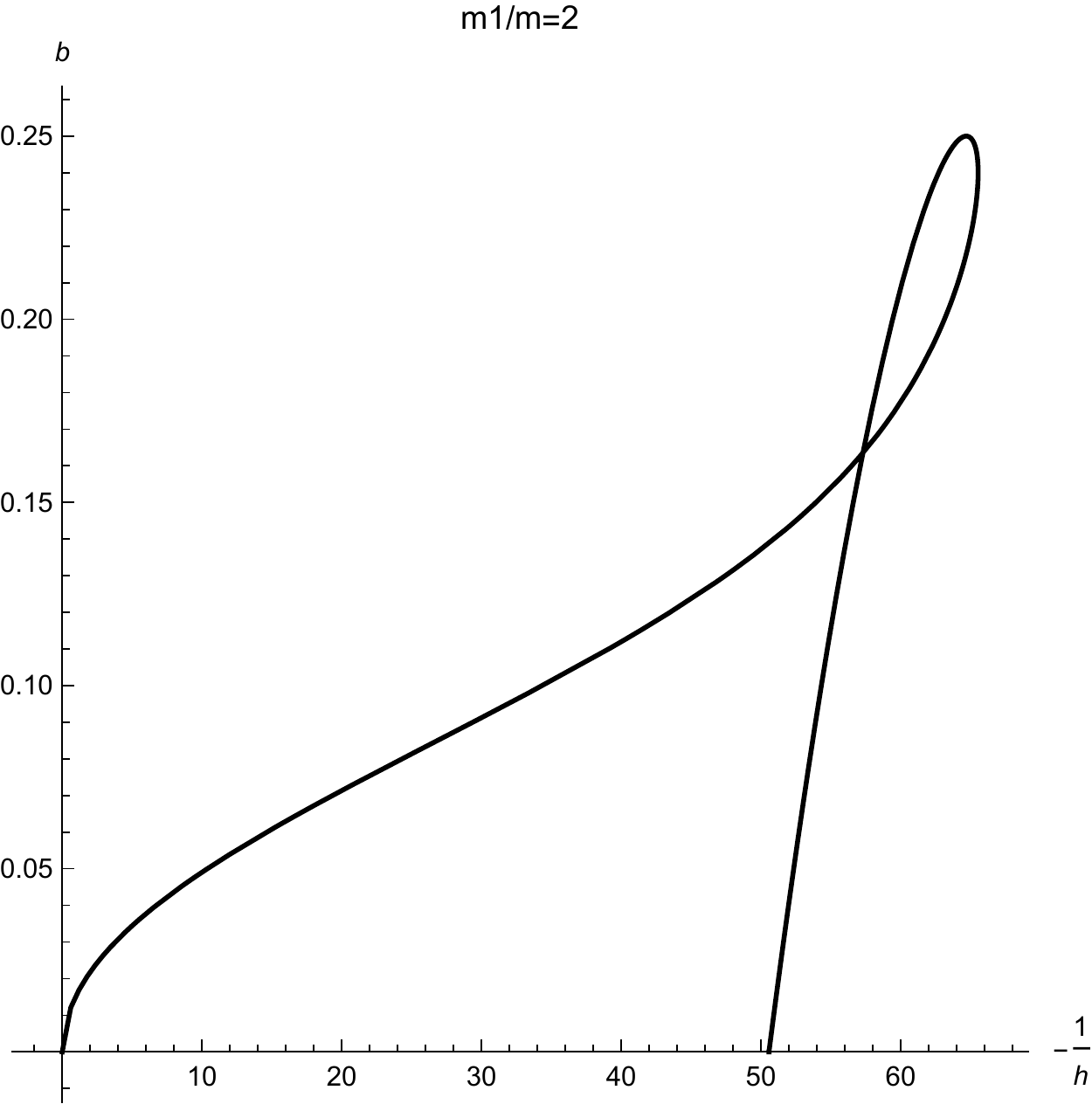}
\caption{Scaled energy-momentum diagram of the isosceles family of relative equilibria (or balanced configuration) in the 3-body problem in dimension 4 for two different mass ratios. These relative equilibria are minima of
the Hamiltonian for sufficiently large negative scaled energy $h$, which occurs for small $b$
corresponding to small $\mu_2$.} \label{fig1}
\end{figure}

Before treating the case of arbitrary masses we now discuss the case of two equal masses $m_2 = m_3 = m$.
This case is technically simpler since the equilibrium conditions for one of the equilibria can be solved explicitly. 
In the general case, instead we only have a series solution near $\mu_2  = 0$.

\begin{Theorem}
For $m_2 = m_3 = m$ an isosceles triangle  is a relative equilibrium of the 3-body problem in $\mathbb{R}^4$
for any  momenta $\mu_1 > \mu_2 > 0$.
The relative equilibrium is given by  $q_2=q_3 = 0$  and 
$p_1 = p_2 = p_3 = p_4 = 0$,
and two additional equations that relate $q_1, q_4$ to $\mu_1, \mu_2$:
\[
   \frac{m^2}{q_1^2}+\frac{4 m m_1 q_1}{\left(q_1^2+4 q_4^2\right)^{3/2}}-\frac{\mu_2^2}{\nu_1 q_1^3} = 0,
   \quad
   \frac{16 m m_1 q_4}{\left(q_1^2+4 q_4^2\right)^{3/2}}-\frac{\mu_1^2  }{ \nu_2 q_4^3} = 0 \,.
\]
For $\mu_2$ sufficiently small the corresponding critical point of the reduced Hamiltonian \eqref{eqn:Hamred} is a minimum.
\end{Theorem}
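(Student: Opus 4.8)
The plan is to apply Lemma~\ref{lem:Veff}: a point with $p=0$ is a relative equilibrium precisely when it is a critical point of $V_\eff$, and it is a minimum of $H$ as soon as that critical point is a minimum of $V_\eff$ and the quadratic form $K_\eff$ in $p$ is positive definite. The organizing idea is the reflection $\rho:(q_1,q_2,q_3,q_4)\mapsto(q_1,-q_2,-q_3,q_4)$. Since $m_2=m_3$, the two distances $\|r_1-r_2\|,\|r_1-r_3\|$ enter $V$ with equal weight and get swapped by $\rho$ (which sends $s:=q_1q_3+q_2q_4\mapsto-s$), so $V$ is even in $s$ and hence $\rho$-invariant; the area $A=\tfrac12(q_1q_4-q_2q_3)$ and the inertia terms $I_1^{-1},I_2^{-1}$ in \eqref{eqn:Veff} are manifestly $\rho$-invariant. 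Thus $V_\eff$ is $\rho$-invariant and its fixed-point set is exactly the isosceles locus $q_2=q_3=0$. By the principle of symmetric criticality the gradient of $V_\eff$ at a fixed point lies in the $+1$-eigenspace of $\rho$, so its $q_2,q_3$ components vanish automatically; the remaining conditions $\partial_{q_1}V_\eff=\partial_{q_4}V_\eff=0$ are obtained by setting $q_2=q_3=0$ (whence $d_{23}=|q_1|$, $d_{12}=d_{13}=\tfrac12\sqrt{q_1^2+4q_4^2}$, $I_1^{-1}=1/(\nu_2q_4^2)$, $I_2^{-1}=1/(\nu_1q_1^2)$) and differentiating, which reproduces the two displayed equations. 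Existence of a solution for every $\mu_1>\mu_2>0$ I would get from coercivity of $V_\eff$ restricted to the isosceles locus on the open quadrant $q_1,q_4>0$: it tends to $+\infty$ as $q_1\to0$ or $q_4\to0$ (the centrifugal terms dominate) and to $0^-$ as $(q_1,q_4)\to\infty$, so a global interior minimum, hence a critical point, exists.

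For the minimum statement, first note that at $(q^*,0)$ the Hessian of $H$ is block diagonal in $(q,p)$: because $f$ in Lemma~\ref{lem5} is even in $L_3$ and both $\partial_qL_3$ and $\partial_pL_3$ vanish at $p=0$, the mixed second derivatives $\partial^2H/\partial q_i\partial p_j$ are zero. Hence $(q^*,0)$ is a nondegenerate minimum iff $\mathrm{Hess}\,V_\eff\succ 0$ and the quadratic form $K_\eff$ in $p$ is positive definite. The symmetry $\rho$ splits $\mathrm{Hess}\,V_\eff$ into a tangential block $H_{14}$ in $(q_1,q_4)$ and a transverse block $H_{23}$ in $(q_2,q_3)$ (the cross second derivatives vanish because $\rho$ acts with opposite signs on the two pairs). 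For $K_\eff$, at $q_2=q_3=0$ one has $L_3=q_1p_2-q_4p_3$, so $p_1,p_4$ decouple with positive coefficients $1/(2\nu_1),1/(2\nu_2)$, and positive definiteness reduces to that of the $(p_2,p_3)$ block; since its determinant equals $\tfrac1{4\nu_1\nu_2}+\kappa\big(\tfrac{q_4^2}{2\nu_1}+\tfrac{q_1^2}{2\nu_2}\big)$ with $\kappa=\tfrac1{2\nu_1}c_2(q_3,q_4)+\tfrac1{2\nu_2}c_2(q_1,q_2)$, it suffices to check $\kappa>0$.

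I would then carry out the three positivity checks by $\mu_2\to0$ asymptotics, after first pinning down the solution branch: the second equation keeps $q_4$ at a finite positive value $q_4^*=O(1)$, while the first forces the binary $(m_2,m_3)$ to collapse, $q_1\sim\mu_2^2/(\nu_1m^2)\to0$ (consistent with the paper's remark on the orders of the $q_i$). With $c_2(q_i,q_j)=\big(-\mu_1^2q_i^2+\mu_2^2q_j^2\big)/\big(4A^2(\mu_1^2-\mu_2^2)\big)$ from the Taylor expansion of $f$, one gets $\kappa=\tfrac1{2(\mu_1^2-\mu_2^2)}\big(\tfrac{\mu_2^2}{\nu_1q_1^2}-\tfrac{\mu_1^2}{\nu_2q_4^2}\big)$, in which the term $\mu_2^2/(\nu_1q_1^2)=O(\mu_2^{-2})$ dominates and is positive, so $\kappa>0$ for small $\mu_2$. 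For $H_{14}$ the $(1,1)$ entry dominates: at the critical point the centrifugal contribution $3\mu_2^2/(\nu_1q_1^4)$ and the collision contribution $-2m^2/q_1^3$ combine, via the relation $\mu_2^2/(\nu_1q_1^3)=m^2/q_1^2$ from the first equilibrium equation, to $m^2/q_1^3>0$, while the $(4,4)$ entry is positive and $O(1)$; so $H_{14}\succ 0$.

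The genuine obstacle is the transverse block $H_{23}$, where careful bookkeeping is essential and naive estimates are misleading. Its three centrifugal contributions, computed from \eqref{eqn:Veff}, are $(H_{23})_{22}^{\mathrm{cent}}=\mu_2^2/(\nu_2q_1^2q_4^2)$, $(H_{23})_{33}^{\mathrm{cent}}=\mu_1^2/(\nu_1q_1^2q_4^2)$ and $(H_{23})_{23}^{\mathrm{cent}}=\mu_1^2/(\nu_2q_1q_4^3)+\mu_2^2/(\nu_1q_1^3q_4)$, and by themselves they give a negative determinant, i.e.\ an indefinite block. Positivity is restored only by the binary-collision term $-m^2/\sqrt{q_1^2+q_2^2}$ of $V$, which contributes $\partial^2_{q_2}V\sim m^2/q_1^3=O(\mu_2^{-6})$ to the $(2,2)$ entry, larger than every centrifugal term, so that $(H_{23})_{22}\sim m^2/q_1^3$, whereas $(H_{23})_{33}\sim\mu_1^2/(\nu_1q_1^2q_4^2)=O(\mu_2^{-4})$ and $(H_{23})_{23}\sim\mu_2^2/(\nu_1q_1^3q_4)=O(\mu_2^{-4})$. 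Then $(H_{23})_{22}(H_{23})_{33}=O(\mu_2^{-10})$ overwhelms $(H_{23})_{23}^2=O(\mu_2^{-8})$, so $\det H_{23}>0$ and, with $(H_{23})_{22}>0$, the block is positive definite. The hard part is to make this rigorous: to control all subleading terms uniformly, to justify the expansion of the solution branch $q_1(\mu_2),q_4(\mu_2)$ (via the implicit function theorem or a Puiseux series), and to keep in mind that the limit drives $A=\tfrac12q_1q_4\to0$ toward the boundary of validity of the chart, so every estimate must hold for $\mu_2>0$ fixed and small rather than in the singular limit itself.
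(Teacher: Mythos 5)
Your proposal is correct, and its core --- block--diagonalising the Hessian of $H$ at $(q^*,0)$ into the $(q_1,q_4)$, $(q_2,q_3)$ and $(p_2,p_3)$ blocks plus the trivial $1/\nu_i$ eigenvalues, and then checking positivity of each block by small-$\mu_2$ asymptotics along the collision branch $q_1\sim\mu_2^2/(\nu_1 m^2)$, $q_4=O(1)$ --- is exactly what the paper does in the three Lemmas following the Theorem; your centrifugal entries and the observation that the binary-collision term $m^2/q_1^3$ is what rescues the $(q_2,q_3)$ block agree with the paper's explicit block formulas and with its $t\to 0$ eigenvalue expansions (the paper's shape parameter $t$ plays the role of your $\mu_2^2$). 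Where you genuinely differ is in the framing of the easier parts: you obtain the critical point on the isosceles locus from the reflection $(q_2,q_3)\mapsto(-q_2,-q_3)$ and symmetric criticality, whereas the paper simply computes; you supply an existence argument via coercivity of $V_{\mathrm{eff}}$ on the quadrant for every $\mu_1>\mu_2>0$, which the paper instead handles by parametrising the family by $t\in(0,1)$ and invoking the implicit function theorem; and you keep $\mu_2$ as the expansion parameter throughout. What the paper's route buys in exchange is global information: by factoring the block determinants it locates the exact boundary of positivity in the $(n,t)$-plane (the curves $P_1=0$, $P_2=0$ and $t=2-\sqrt3$), not just the asymptotic statement; your route buys a cleaner conceptual explanation of why $q_2=q_3=0$ is consistent and why the Hessian splits. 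The caveats you flag yourself (justifying the branch expansion, and the fact that $A\to 0$ pushes toward the boundary of the chart so estimates must be made at fixed small $\mu_2$) are the right ones, and none of them is a gap in substance.
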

\begin{proof}
In the isosceles case $a_{1} = a_{2} = \frac{1}{2}$, $\nu_1 = m/2$, and $\nu_2 = 2 m m_1 / (2 m + m_1)$.
The derivative of the function $f$ with respect to $p_i$ at vanishing momenta vanishes.
The claim that the critical point is a minimum is proved in the following Lemmas.
\end{proof}

Denote the mass ratio as $n=m_1/m$ and the length ratio as $\rho = q_1/q_4$.
To rationalise the square root use $\rho = 4 t / ( 1 - t^2)$ where $ t \in (0, 1)$
and $t = 2 - \sqrt{3}$ corresponds to the equilateral triangle. With this parametrisation
the equilibrium condition determines $\mu_i^2$ as rational functions of of $n$ and $t$
(up to scaling with $m^3 q_4$). One can check that by the implicit function theorem this 
is always possible instead of eliminating $q_1, q_4$. 

A family of equilibria is best described in an energy-momentum diagram, 
see Figure~\ref{fig1}. Define the scaled energy $h$ and dimensionless momentum $b$ as
\[
  (h,b) = \left(    (\mu_1 + \mu_2)^2 H|_{eq} ,    \frac{\mu_1 \mu_2}{(\mu_1 + \mu_2)^2}  \right) \,.
\]
In Fig.~\ref{fig1} instead $(-1/h, b)$ is plotted since we are interested in the limit
$\mu_2 \to 0$ where $h \to - \infty$ and $b \to 0$. The parameter along the curve is the shape parameter 
$\rho = q_1 / q_4$  ranging from $\rho = 0$ (collision, left endpoint with $b=0$)  through $\rho =2/\sqrt{3}$ (equilateral)  
to $\rho = \infty$ (collinear, right endpoint with $b=0$). Equivalently, the parameter $t \in (0,1)$ can be used.

In the limit $\mu_2 \to 0$ and hence $b \to 0$, while $\mu_1$ remains finite, the equilibrium condition of Theorem 3 can be written as ($n =  \frac{m_1}{m}$)
\[
   q_1 = \frac{ 2 \mu_1^2}{m^3} b^2 ( 1 + 8 b + O(b^2)), \quad
   q_4 = \frac{ \mu_1^2}{4 m^3 n^2 ( 2 + n) } ( (2 + n)^2 + 24 b^4 + O(b^5)) \,.
\]
The distance between the equal mass particles goes to zero with $b^2$, while
the distance to the third particle remains finite in this limit.


\begin{Lemma}
The Hessian of the reduced Hamiltonian at the isosceles equilibrium is block-diagonal, 
with three non-trivial $2\times 2$ blocks and two explicit eigenvalues given by $1/\nu_i$. 
In the following expressions for these $2\times 2$ blocks  
the relationship between $q_1, q_4$ and $\mu_1, \mu_2$ has not been used.
\[
\left. \frac{\partial^2 H } { \partial^2 ( q_2, q_3) } \right|_{eq} = \left(
\begin{array}{cc}
 \frac{ \mu_2^2}{\nu_2 q_1^2 q_4^2  }+\frac{m^2}{q_1^3}+
    \frac{4 m m_1 \left(q_1^2-8 q_4^2\right)}{\left(q_1^2+4 q_4^2\right)^{5/2}} & \frac{ \mu_1^2}{\nu_2 q_1 q_4^3  }+\frac{
   \mu_2^2}{\nu_1 q_1^3 q_4}-\frac{48 m m_1 q_1 q_4}{\left(q_1^2+4 q_4^2\right)^{5/2}} \\
 \frac{ \mu_1^2}{\nu_2 q_1 q_4^3  }+\frac{ \mu_2^2}{\nu_1 q_1^3 q_4}-\frac{48 m m_1 q_1
   q_4}{\left(q_1^2+4 q_4^2\right)^{5/2}} & \frac{ \mu_1^2}{\nu_1 q_1^2 q_4^2}-\frac{32 m m_1
   \left(q_1^2-2 q_4^2\right)}{\left(q_1^2+4 q_4^2\right)^{5/2}} \\
\end{array}
\right)
\]

\[
\left.\frac{\partial^2 H } { \partial^2 ( q_1, q_4) }  \right|_{eq}=\left(
\begin{array}{cc}
 \frac{3 \mu_2^2}{\nu_1 q_1^4 }-\frac{2 m^2}{q_1^3}-\frac{8 m_1 \left(q_1^2-2 q_4^2\right) m}{\left(q_1^2+4 q_4^2\right)^{5/2}} & -\frac{48 m m_1 q_1 q_4}{\left(q_1^2+4 q_4^2\right)^{5/2}} \\
 -\frac{48 m m_1 q_1 q_4}{\left(q_1^2+4 q_4^2\right)^{5/2}} & \frac{3 \mu_1^2}{ \nu_2 q_4^4}
     +\frac{16 m m_1 \left(q_1^2-8 q_4^2\right)}{\left(q_1^2+4 q_4^2\right)^{5/2}} \\
\end{array}
\right)
\]

\[
\left.\frac{\partial^2 H } { \partial^2 ( p_2, p_3) } \right|_{eq} =
\frac{1}{\mu_1^2 - \mu_2^2} 
\left(
\begin{array}{cc}
  \mu_1^2 \left( \frac{1}{\nu_1}-\frac{ q_1^2}{\nu_2 q_4^2 }\right) &
           \frac{ \mu_1^2  q_1}{ \nu_2 q_4 }-\frac{ \mu_2^2 q_4}{\nu_1 q_1} \\
 \frac{ \mu_1^2 q_1}{\nu_2 q_4  }-\frac{ \mu_2^2 q_4}{\nu_1 q_1} & 
          \mu_2^2  \left( \frac{ q_4^2}{\nu_1 q_1^2} -\frac{1}{\nu_2 }\right) \\
\end{array}
\right)
\]
\end{Lemma}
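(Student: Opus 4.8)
\emph{Plan.} The strategy is to first reduce the apparent $8\times 8$ Hessian to a few small blocks using a discrete reflection symmetry together with the vanishing of all position--momentum cross derivatives at the equilibrium, and only afterwards to evaluate the surviving blocks explicitly. I would begin by introducing the involution
\[
   R:(q_1,q_2,q_3,q_4,p_1,p_2,p_3,p_4)\mapsto(q_1,-q_2,-q_3,q_4,p_1,-p_2,-p_3,p_4).
\]
In the equal-mass case $a_2=a_3=\tfrac12$ the two distances $\|r_3-r_1\|$ and $\|r_1-r_2\|$ are interchanged by $x_1\cdot x_2\mapsto-x_1\cdot x_2$, so the Newtonian potential $V$ is even in its third argument $q_1q_3+q_2q_4$; since that is the only argument of $V$ changing sign under $R$, $V$ is $R$-invariant. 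The area $A=\tfrac12(q_1q_4-q_2q_3)$ is invariant, the angular momentum obeys $L_3\mapsto-L_3$, and because $f$ depends on $L_3$ only through $L_3^2$ (Lemma~\ref{lem5}) the full Hamiltonian \eqref{eqn:Hamred} is $R$-invariant. As the isosceles equilibrium $q_2=q_3=0$, $p=0$ is a fixed point of the linear map $R$, the Hessian commutes with $R$ and hence decomposes along its eigenspaces: the $+1$ space is spanned by $(q_1,q_4,p_1,p_4)$ and the $-1$ space by $(q_2,q_3,p_2,p_3)$. This immediately annihilates every mixed second derivative coupling an $R$-even to an $R$-odd coordinate.

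Next I would show that there is no position--momentum coupling at the equilibrium, which further splits each $4\times 4$ eigenblock into a pure-$q$ and a pure-$p$ part. All $p$-dependence of $H$ beyond the diagonal kinetic terms enters through $L_3$ inside $f$. At the equilibrium $L_3=0$, and since $f$ is even in $L_3$ we have $f_{L_3}=0$ there; moreover each $\partial L_3/\partial q_i$ is linear in the momenta (e.g.\ $\partial L_3/\partial q_1=p_2$) and so also vanishes at $p=0$. Carrying one $q$-derivative and one $p$-derivative through the chain rule, every surviving term retains a factor of either $f_{L_3}$ or $\partial L_3/\partial q_i$, whence $\partial^2 H/\partial q_i\partial p_j|_{eq}=0$. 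Combined with the $R$-splitting this produces exactly four $2\times 2$ blocks, on $(q_1,q_4)$, $(q_2,q_3)$, $(p_1,p_4)$ and $(p_2,p_3)$. For the $(p_1,p_4)$ block I would note that $\partial L_3/\partial p_1=-q_2$ and $\partial L_3/\partial p_4=q_3$ both vanish at the equilibrium, so $f$ contributes nothing and the block is diagonal, equal to $\mathrm{diag}(1/\nu_1,1/\nu_2)$; these are the two explicit eigenvalues.

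Finally the three remaining blocks are computed directly. For $(p_2,p_3)$ the relevant data are $f_{L_3L_3}|_{L_3=0}=2c_2$ together with $\partial L_3/\partial p_2=q_1$, $\partial L_3/\partial p_3=-q_4$, where $c_0,c_2$ are the Taylor coefficients of Lemma~\ref{lem:Veff}; expanding $L_d=\sqrt{\Delta^2-L_3^2}$, $L_s=\sqrt{\Sigma^2-L_3^2}$ to order $L_3^2$ gives $c_2(q_i,q_j)=\frac{1}{4A^2(\mu_1^2-\mu_2^2)}(-\mu_1^2 q_i^2+\mu_2^2 q_j^2)$, and inserting $A^2=\tfrac14 q_1^2q_4^2$ at the equilibrium reproduces the stated block with its prefactor $1/(\mu_1^2-\mu_2^2)$. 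For $(q_1,q_4)$ and $(q_2,q_3)$ one differentiates $V$ and the $L_3=0$ value of $f$ (using $L_s^2=\Sigma^2$, $L_d^2=\Delta^2$), the equilibrium conditions of Theorem~3 ensuring that the first derivatives of $V_{\eff}$ indeed vanish.

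\emph{Main obstacle.} I expect the genuine difficulty to be the bookkeeping in the explicit $q$-derivatives, since $f$ depends on each $q_i$ not only explicitly but also through $A$ and through $L_s,L_d$, so second derivatives generate several competing terms that must be collected and simplified with care. The symmetry and decoupling arguments are precisely what make this tractable, reducing the full Hessian to three independent $2\times 2$ computations rather than a single $8\times 8$ one.
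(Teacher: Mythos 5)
Your proposal is correct, and it reproduces the Lemma by a route that is computationally the same as the paper's but structurally more informative. The paper's entire proof is the one-line assertion that the blocks are found by differentiating $V_\eff$ and $K_\eff$ at $p_1=\dots=p_4=0$, $q_2=q_3=0$; the block-diagonal structure is left implicit as an outcome of that computation. You instead derive the block structure a priori: the involution $R$ (legitimate here because for $m_2=m_3$ the potential is even in $x_1\cdot x_2$ and $f$ depends on $L_3$ only through $L_3^2$) splits the tangent space into the $(q_1,q_4,p_1,p_4)$ and $(q_2,q_3,p_2,p_3)$ eigenspaces, and the parity of $f$ in $L_3$ together with $L_3|_{p=0}=0$ and $\partial L_3/\partial q_i = O(p)$ kills all $q$--$p$ cross terms and all $f$-contributions to the $(p_1,p_4)$ block. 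This is a genuine addition: it explains that the decoupling is not an accident of the Newtonian potential (it holds for any potential depending only on the $x_i\cdot x_j$ once $m_2=m_3$), it yields $\mathrm{diag}(1/\nu_1,1/\nu_2)$ for the $(p_1,p_4)$ block with no computation, and it reduces the verification to three independent $2\times 2$ calculations. Your Taylor coefficient $c_2(q_i,q_j)=(-\mu_1^2q_i^2+\mu_2^2q_j^2)/\bigl(4A^2(\mu_1^2-\mu_2^2)\bigr)$, combined with $L_3|_{q_2=q_3=0}=q_1p_2-q_4p_3$ and $A=\tfrac12 q_1q_4$, reproduces the stated $(p_2,p_3)$ block exactly, including the $1/(\mu_1^2-\mu_2^2)$ prefactor. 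What the paper's route buys is brevity; what yours buys is a structural explanation and a smaller computation. The one place where you stay at the same level of detail as the paper is the explicit differentiation of $V_\eff$ for the two $q$-blocks, which is indeed only bookkeeping once one notes that at the equilibrium the two unequal sides coincide, $\|\tfrac12 x_1\pm x_2\|=\tfrac12\sqrt{q_1^2+4q_4^2}$, and $\|x_1\|=q_1$.
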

\begin{proof}
The blocks are found by differentiating $V_\eff$ and $K_\eff$,  evaluated at $p_1= p_2=p_3=p_4=0$, $q_2 = q_3 = 0$.
\end{proof}

In the following $\mu_1, \mu_2$ have been eliminated using the equilibrium condition, 
parametrised by $t$.
The eigenvalues then depend (up to an overall scaling) 
on the essential parameters $n = m_1/m $ and $t$. 

\begin{Lemma}
For $ t \to 0$ all eigenvalues of the Hessian are positive. \\
The $m^2/q_4^3$-scaled eigenvalues of  the $(q_2, q_3)$-block  for $t \to 0$ are
\[
\frac{n^2}{(2 n+4) t^2}-\frac{1}{4 t}-\frac{2 \left(7 n^2+2 n\right)}{n+2}+O\left(t\right), \quad
\frac{1}{64  t^3}+\frac{\frac{17}{64}+\frac{1}{8 n}}{t}+\frac{11 n^2+6 n}{n+2}+O\left(t\right) \,.
\]
The $m^2/q_4^3$-scaled eigenvalues of the $(q_1, q_4)$-block for $t \to 0$ are
\[
\frac{1}{64 t^3}-\frac{3}{64 t}+2 n+O\left(t\right), \quad
2 n+12 n t^2+O\left(t^3\right) \,.
\]   
The $m$-scaled eigenvalues of the $(p_2, p_3)$-block for $t \to 0$ are
\[
   2-\frac{8 (3 n-2) t^2}{n}+O\left(t^3\right), \quad
   \frac{n+2}{16 n^2 t}+\frac{(n+2)^2}{32 n^4}+O\left(t\right) \,.
\]
\end{Lemma}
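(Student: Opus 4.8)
The plan is to reduce all entries of the three $2\times 2$ blocks from the previous Lemma to explicit rational functions of the single shape parameter $t$ and the mass ratio $n = m_1/m$, and then to read off the leading behaviour of the six block-eigenvalues as $t\to 0$.

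First I would use the two equilibrium conditions of Theorem 3 to eliminate the conserved momenta,
\[
\mu_1^2 = \frac{16\, m\, m_1\, \nu_2\, q_4^4}{(q_1^2 + 4 q_4^2)^{3/2}}, \qquad
\mu_2^2 = \nu_1 q_1^3\left(\frac{m^2}{q_1^2} + \frac{4\, m\, m_1\, q_1}{(q_1^2 + 4 q_4^2)^{3/2}}\right),
\]
and substitute these into the blocks, so that every entry depends only on $q_1, q_4, m, m_1$. Then I would introduce the length ratio $\rho = q_1/q_4$ together with the rationalising substitution $\rho = 4t/(1-t^2)$. Its purpose is that
\[
q_1^2 + 4 q_4^2 = q_4^2(\rho^2 + 4) = q_4^2\,\frac{4(t^2+1)^2}{(1-t^2)^2},
\]
so the half-integer powers $(q_1^2+4q_4^2)^{3/2}$ and $(q_1^2+4q_4^2)^{5/2}$ appearing in the blocks become rational in $t$; after factoring out the positive common scales stated in the Lemma ($m^2/q_4^3$ for the two $q$-blocks and $m$ for the $p$-block), each matrix entry is a rational function of $t$ and $n$ alone.

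For each symmetric block $\left(\begin{smallmatrix} a & b \\ b & d\end{smallmatrix}\right)$ the eigenvalues are $\tfrac12\big[(a+d)\pm\sqrt{(a-d)^2+4b^2}\big]$, and I would expand these as Laurent series about $t=0$, keeping enough terms to match the expressions claimed. Positivity is then immediate: the leading coefficients are $\tfrac{n^2}{2n+4}$ and $\tfrac{1}{64}$ for the $(q_2,q_3)$-block, $\tfrac{1}{64}$ and $2n$ for the $(q_1,q_4)$-block, and $2$ and $\tfrac{n+2}{16n^2}$ for the $(p_2,p_3)$-block, each strictly positive for $n>0$. Hence for all sufficiently small $t>0$ every block is positive definite; together with the two explicit positive eigenvalues $1/\nu_i$ this makes the full block-diagonal Hessian positive definite, so the isosceles relative equilibrium is a nondegenerate minimum in the limit $\mu_2\to 0$, as asserted in Theorem 3.

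The main obstacle is purely the algebraic bookkeeping: the $(q_1^2+4q_4^2)^{5/2}$ denominators in the $q$-blocks produce bulky rational expressions, so I would rely on the substitution above (and a computer algebra system) to keep the Laurent expansions tractable. The one point that requires genuine care is that the six eigenvalues diverge at different rates as $t\to 0$ --- like $t^{-3}$, $t^{-2}$, $t^{-1}$ and $t^0$ --- so within each block one must correctly identify which branch of the square root yields which eigenvalue and expand to the appropriate order; a careless choice would assign the dominant $t^{-3}$ growth to the wrong eigenvalue and spoil the matching.
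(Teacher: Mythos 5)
Your proposal is correct and follows essentially the same route as the paper: the authors likewise eliminate $\mu_1,\mu_2$ via the equilibrium conditions, parametrise by $t$ (with $\rho=4t/(1-t^2)$ rationalising the square roots), and obtain the eigenvalues of each $2\times 2$ block by solving the quadratic characteristic equation and expanding the roots in $t$. Your added care about which branch of the square root carries the dominant $t^{-3}$ growth is a sensible precaution, but it is the same computation the paper performs.
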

\begin{proof}
Because of the block-diagonal structure of the Hessian these can be obtained by solving quadratic 
equations and expanding the roots in the small parameter $t$.
\end{proof}

Instead of using series expansion we can check conditions for the Hessian to be positive definite.
This can be done globally, for all $t \in (0, 1)$. 
Definitness is lost when the determinants of the blocks go through zero or infinity. 
The expression $ \mu_1^2 - \mu_2^2$ appears in the determinant of two blocks, once 
in the numerator and once in the denominator.
Equality occurs when the polynomial
 \[
 P_2(n,t) =  2 n^2 \left(t^4-6 t^2+1\right) \left(t^2+1\right)^2-n t \left( 64 t^3+(t^2+1)^3 \right)-2 t
   \left(t^2+1\right)^3
\]
vanishes, which is obtained from the non-trivial equilibrium conditions. 
The curve $P_2(n,t) = 0$ starts at the origin in $(n,t)$ and asymptotes to $t = \sqrt{2} - 1$, 
as does $P_1(n,t) = 0$,  the dashed line in Figure~\ref{fig2} indicates the asymptote.
When $\mu_1 = \mu_2$  the maximal value $b = 1/4$ in Figure~\ref{fig1} is reached.
Note that this case is excluded in the reduction theorem.

\begin{figure}
\includegraphics[width=9cm]{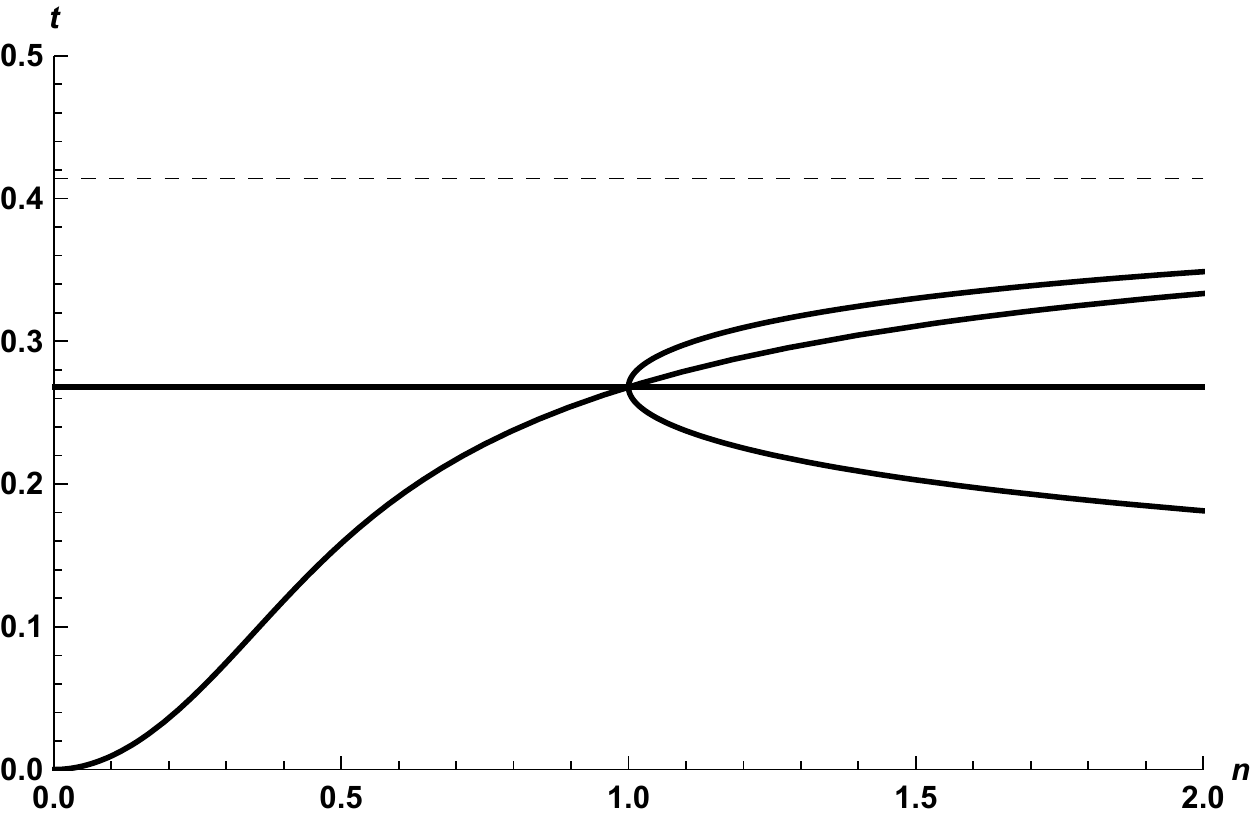}
\caption{Parameter space $n = m_1/m > 0$ and shape parameter $t \in (0,1)$ of the isosceles equilibrium.
The curves divide the positive quadrant into 6 regions. The horizontal line $t = 2 - \sqrt{3}$ corresponds to the 
equilateral triangles. The parabola-shaped curve $P_1(n,t) = 0$ indicates a vanishing of the determinant of the $(q_2, q_3)$-block.
The  curve $P_2(n,t) = 0$ starting at the origin indicates a vanishing of the determinant of the $(q_2, q_3)$-block and an infinity in the
determinant of the $(p_2,p_3)$-block. In the region adjacent to the $n$-axis all eigenvalues are positive and the 
isosceles solution is a minimum of the 3-body problem in $\mathbb{R}^4$.}
\label{fig2}
\end{figure}

\begin{Lemma}
The eigenvalues of the $(q_2, q_3)$-block are positive if $(\mu_1^2 - \mu_2^2) P_1(n,t) > 0$.

The eigenvalues of the $(q_1, q_4)$-block are always positive.

The eigenvalues of the  $(p_2, p_3)$-block are positive if $ (( 2 - \sqrt{3}) - t) ( \mu_1^2 - \mu_2^2) > 0$.
\end{Lemma}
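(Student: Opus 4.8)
The plan is to use the elementary fact that a real symmetric $2\times2$ matrix has both eigenvalues positive if and only if its trace and its determinant are both positive. So for each of the three $2\times2$ blocks displayed above I would compute the trace and the determinant, eliminate $\mu_1^2$ and $\mu_2^2$ by means of the two equilibrium conditions relating $q_1,q_4$ to $\mu_1,\mu_2$, and rationalise the resulting expressions using the substitution $\rho = q_1/q_4 = 4t/(1-t^2)$ with $t\in(0,1)$. Since every denominator that appears (powers of $q_1$, of $q_4$, and of $q_1^2+4q_4^2$) is strictly positive, the sign of each determinant and each trace is then governed by an explicit rational, in fact polynomial, function of the two essential parameters $n=m_1/m$ and $t$.

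I would treat the $(p_2,p_3)$-block first, since it is the cleanest and already exhibits all the mechanisms. Writing $r=q_1/q_4$, a direct expansion of the inner matrix shows that its determinant factors as
\[
 (\mu_1^2-\mu_2^2)\Big(\tfrac{\mu_2^2}{\nu_1^2 r^2}-\tfrac{\mu_1^2 r^2}{\nu_2^2}\Big),
\]
so that, after dividing by the prefactor $(\mu_1^2-\mu_2^2)^2$, the determinant of the block equals $(\mu_1^2-\mu_2^2)^{-1}\big(\tfrac{\mu_2^2}{\nu_1^2 r^2}-\tfrac{\mu_1^2 r^2}{\nu_2^2}\big)$. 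Inserting $\nu_1=m/2$, $\nu_2=2mm_1/(2m+m_1)$ and the equilibrium values of $\mu_1^2,\mu_2^2$, the second factor collapses to a strictly positive multiple of $(q_1^2+4q_4^2)^{3/2}-8q_1^3$, which vanishes precisely at $q_1/q_4=2/\sqrt3$, i.e.\ at the equilateral ratio $t=2-\sqrt3$, and is positive for smaller $t$. Hence the determinant has the sign of $((2-\sqrt3)-t)(\mu_1^2-\mu_2^2)^{-1}$; checking that the trace is positive whenever this determinant is positive then yields the stated criterion for the $(p_2,p_3)$-block.

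For the $(q_2,q_3)$- and $(q_1,q_4)$-blocks I would run the same computation, now retaining the Newtonian terms. For the $(q_2,q_3)$-block the determinant, after substitution, factors as a strictly positive function times $(\mu_1^2-\mu_2^2)\,P_1(n,t)$ (this is exactly the factor $\mu_1^2-\mu_2^2$ noted above to appear in the numerator), so that, together with positivity of the trace throughout $(0,\infty)\times(0,1)$, both eigenvalues are positive exactly when $(\mu_1^2-\mu_2^2)P_1(n,t)>0$. For the $(q_1,q_4)$-block the claim is stronger: I would show that both its trace and its determinant reduce to expressions that are manifestly positive for all admissible $(n,t)$, so that no sign condition is needed at all.

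The main obstacle is this last, global, step: establishing that the polynomials controlling the $(q_1,q_4)$-determinant and the $(q_2,q_3)$-trace are strictly positive on the entire open region $(n,t)\in(0,\infty)\times(0,1)$, rather than merely in the limit $t\to0$ covered by the series expansions computed above. After clearing the positive denominators and rationalising the square root, each reduces to showing that a two-variable polynomial has no zero in the open region. The plan is to exhibit these polynomials as sums of monomials with positive coefficients wherever possible, and otherwise to locate their zero sets (as is done for $P_1$ and $P_2$) and verify by a boundary and asymptotic analysis — matching the $t\to0$ expansions already established together with the equilateral and collinear limits — that the region adjacent to the $n$-axis lies strictly inside the positivity domain.
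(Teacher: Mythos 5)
Your strategy is essentially the paper's: the proof given there also rests on computing the determinant of each block, rationalising with $\rho=4t/(1-t^2)$, and reading off the sign changes — the $(q_2,q_3)$-determinant vanishes on $\mu_1^2=\mu_2^2$ and on $P_1(n,t)=0$, the $(q_1,q_4)$-determinant is the manifestly positive expression $m^4 n (t^2-1)^6(128nt^3+t^6+15t^4+15t^2+1)/\bigl(32 q_4^6 t^3 (t^2+1)^6\bigr)$, and the $(p_2,p_3)$-determinant carries the factor $t^2-4t+1$ changing sign at $t=2-\sqrt3$. Your explicit factorisation of the $(p_2,p_3)$-block is correct and checks out: the inner determinant is indeed $(\mu_1^2-\mu_2^2)\bigl(\mu_2^2/(\nu_1^2 r^2)-\mu_1^2 r^2/\nu_2^2\bigr)$, and after inserting the equilibrium conditions the second factor is a positive multiple of $(q_1^2+4q_4^2)^{3/2}-8q_1^3$, vanishing exactly at $\rho=2/\sqrt3$, i.e.\ $t=2-\sqrt3$; this is a cleaner derivation than simply quoting the rationalised determinant. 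The one genuine divergence is how you pass from ``determinant positive'' to ``both eigenvalues positive.'' You propose to verify trace positivity globally on $(n,t)\in(0,\infty)\times(0,1)$, which for the $(q_2,q_3)$-block is an extra two-variable polynomial positivity problem that you correctly flag as the main obstacle and do not resolve. The paper sidesteps this entirely: the preceding Lemma establishes that \emph{all} eigenvalues are positive in the limit $t\to0$, and since eigenvalues of a continuously varying symmetric matrix can only change sign where a determinant vanishes (or blows up), positivity persists throughout the region adjacent to the $n$-axis bounded by $P_1=0$, $P_2=0$ and $t=2-\sqrt3$. If you adopt that continuation argument you can drop the trace computations altogether (at the mild cost of arguing connectivity of the region in Figure~2); otherwise you must actually carry out the global trace estimate you defer, which is the only real gap in your write-up.
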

\begin{proof}
The determinant of the $(q_2, q_3)$-block vanishes when $\mu_1^2 = \mu_2^2$ and when the polynomial
\[
   P_1(n,t) =  32 t^3 \left( 3 n ( t^4-6 t^2+1) + 2 ( t^4 - 10 t^2 + 1)\right)- ( t^2 + 1)^5
\]
vanishes.

The determinant of the $(q_1, q_4)$-block is 
\[
\frac{m^4 n \left(t^2-1\right)^6 \left(128 n t^3+t^6+15 t^4+15 t^2+1\right)}{32 q_4^6 t^3
   \left(t^2+1\right)^6}
\]
which is positive for positive $n$ and $t$.

The determinant of  the $(p_2, p_3)$-block  (without the prefactor $\mu_1^2 - \mu_2^2$) is 
\[
\frac{ (1 - t^2) \left(t^2-4 t+1\right) \left(t^4+4 t^3+18 t^2+4 t+1\right)}{2 t
   \left(t^2+1\right)^3},
\]
where only the middle factor in the numerator changes sign at $t = 2 - \sqrt{3}$ (equilateral triangle).
The prefactor itself vanishes when $\mu_1^2 = \mu_2^2$, which implies $P_2(t) = 0$.
\end{proof}
The curves $P_1(t) = 0$ and $P_2(t) = 0$ together with $t = 2 - \sqrt{3} $ are shown in Figure~\ref{fig2}.

The frequencies $\omega_i$ for rotation in the eigenplanes are determined by
differentiating the Hamiltonian with respect to $\mu_1$ and $\mu_2$. 
At the equilibrium the only contribution comes from $V_\eff$ such that 
$\omega_i = \mu_i I_i^{-1}$.
This gives
\[ 
      \omega_1  = \frac{ \mu_1}{\nu_2 q_4^2} ,  \quad   \omega_2  = \frac{ \mu_2 }{\nu_1  q_1^2 }
\]
and hence
\[
    \omega_1  =    \sqrt{ \frac{ m (2 + n) } {q_4^3}}   \sqrt{  \frac{(1 - t^2)^3 }{(1+t^2)^3} }, \quad
    \omega_2 = \sqrt{ \frac { 2 m} { q_1^3} }  \sqrt{ 1 +  \frac{ 32 n t^3 }{ (1 + t^2)^3}  }
\]
such that for $t \to 0$ the frequencies of rotation are given by Kepler's third law.
Frequency $\omega_2$ is determined by masses $m_2$ and $m_3$ orbiting around each other with distance $q_1$, ignoring $m_1$, 
while frequency $\omega_1$ is determined by mass $m_1$ orbiting around the combined
mass $m_2 + m_3$ at distance $q_3$, and hence behaves like $\sqrt{ M / q_4^3}$.
Note that $\omega_2$ diverges like $t^{-3/2}$, while $\omega_1$ remains finite.
The frequency ratio simply is $((1 + t^2)^2 + 32 n t^3)/(32 (2+n) t^3)$ which in general is irrational so 
that the relative equilibrium is a quasiperiodic motion with two incommensurate frequencies.


%

%
%
%
%
%

\section{General masses}

Denote by $\mu$ the ratio $\mu = \mu_2/\mu_1$ and by $M = m_1 + m_2 + m_3$. 
We are now giving a series expansion of the coordinates of the equilibrium  in the limit $\mu_2 \to 0$ (and hence $\mu \to 0$).

\begin{Lemma} \label{genequcond}
The equilibrium condition $D_q V_\eff = 0$ implies  the solvability condition $q_1 q_2 \nu_1 + q_3 q_4 \nu_2 = 0$.
Simplifying the equilibrium condition using the solvability condition gives
\begin{align*}
   \frac{ I_1 \mu_2^2 q_4}{8 A^3 \nu_1 \nu_2} & =  2 q_1 V_1 + q_3 V_3, \quad
   \frac{ -I_2 \mu_1^2 q_3}{8 A^3 \nu_1 \nu_2} =  2 q_2 V_1 + q_4 V_3, \\
   \frac{ -I_1 \mu_2^2 q_2}{8 A^3 \nu_1 \nu_2} & =  2 q_3 V_2 + q_1 V_3, \quad
   \frac{ I_2 \mu_1^2 q_1}{8 A^3 \nu_1 \nu_2} =  2 q_4 V_2 + q_2 V_3 \,.  
\end{align*}
\end{Lemma}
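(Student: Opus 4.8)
The plan is to compute the gradient $D_q V_{\eff}$ directly, separate out the rotational part $W := \tfrac12(\mu_1^2 I_1^{-1} + \mu_2^2 I_2^{-1})$ from $V$, and exploit the fact that $V$ depends on $q$ only through $q_1^2+q_2^2$, $q_3^2+q_4^2$, $q_1 q_3 + q_2 q_4$. Writing $V_a$ for the partial derivative of $V$ with respect to its $a$-th argument, the chain rule gives $\partial_{q_1} V = 2 q_1 V_1 + q_3 V_3$, $\partial_{q_2} V = 2 q_2 V_1 + q_4 V_3$, $\partial_{q_3} V = 2 q_3 V_2 + q_1 V_3$, $\partial_{q_4} V = 2 q_4 V_2 + q_2 V_3$, which are precisely the four right-hand sides in the statement. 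Since $V_\eff = W + V$, the equilibrium conditions $\partial_{q_i} V_\eff = 0$ are equivalent to $\partial_{q_i} V = -\partial_{q_i} W$, so the whole task reduces to showing that $-\partial_{q_i} W$ equals the stated left-hand sides once the solvability condition holds.

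Next I would compute the four derivatives of $W$. Writing $W = \Phi/(8A^2)$ with $\Phi := \mu_1^2(q_1^2/\nu_2 + q_3^2/\nu_1) + \mu_2^2(q_2^2/\nu_2 + q_4^2/\nu_1)$, and using $\partial_q A = \tfrac12(q_4,-q_3,-q_2,q_1)$, each $\partial_{q_i} W$ splits into a ``$\Phi$-term'' of the form $\mp \Phi q_j/(8A^3)$ and a ``diagonal term'' of the form $\mu^2 q_i/(4A^2\nu)$. To obtain the solvability condition I would then note that the $4\times 3$ matrix expressing $(\partial_{q_i} V)$ in terms of $(V_1,V_2,V_3)$ has, for $A\neq 0$, a one-dimensional left null space spanned by $(q_2,-q_1,q_4,-q_3)$. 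Contracting the four equilibrium equations against this vector annihilates every $V_a$ on the right; on the $W$-side the $\Phi$-terms cancel identically and the diagonal terms collapse to $\tfrac{\mu_1^2-\mu_2^2}{4A^2\nu_1\nu_2}(\nu_1 q_1 q_2 + \nu_2 q_3 q_4)$. Because $\mu_1\neq\mu_2$ and $A\neq 0$ are assumed generic, this forces $\nu_1 q_1 q_2 + \nu_2 q_3 q_4 = 0$, which is the claimed solvability condition.

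Finally I would substitute this relation back into each of the four equations. The crucial algebraic input is the factorization identity $\nu_1\nu_2 N_1 N_2 - 4A^2 = \tfrac{1}{\nu_1\nu_2}(\nu_1 q_1 q_2 + \nu_2 q_3 q_4)^2$, where $N_1 = 4A^2/I_1 = q_1^2/\nu_2 + q_3^2/\nu_1$ and $N_2 = 4A^2/I_2 = q_2^2/\nu_2 + q_4^2/\nu_1$; on the solvability locus its right-hand side vanishes, giving $\nu_1\nu_2 N_1 N_2 = 4A^2$, equivalently $N_2 = I_1/(\nu_1\nu_2)$ and $N_1 = I_2/(\nu_1\nu_2)$. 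In each equation, expanding $2A = q_1 q_4 - q_2 q_3$ lets the diagonal term cancel the matching piece of the $\Phi$-term; the solvability condition then cancels the remaining cross piece; and what survives is exactly $N_1$ or $N_2$ times the appropriate $\mu^2 q_j$, which by the two boxed relations becomes the stated $\tfrac{\pm I_{1,2}\,\mu_{2,1}^2\, q_j}{8A^3\nu_1\nu_2}$ with the signs as written.

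The hard part will be the bookkeeping of this last step: establishing the factorization identity and then tracking the cancellations and signs carefully enough that each of the four equilibrium equations lands precisely on its claimed simplified form. By contrast, the identification of the left null vector and the identical cancellation of the $\Phi$-terms in the solvability computation are comparatively routine, as is the chain-rule computation of the right-hand sides.
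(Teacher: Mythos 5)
Your proposal is correct and follows essentially the same route as the paper: chain rule for the $V$-derivatives, contraction of the equilibrium equations against the left null vector $(q_2,-q_1,q_4,-q_3)$ to produce $(\mu_1^2-\mu_2^2)(\nu_1 q_1 q_2+\nu_2 q_3 q_4)/(4A^2\nu_1\nu_2)=0$, and then simplification on the solvability locus. Your factorization identity $\nu_1\nu_2 N_1 N_2 - 4A^2 = (\nu_1 q_1 q_2+\nu_2 q_3 q_4)^2/(\nu_1\nu_2)$ is precisely the fact the paper uses implicitly when it asserts $I_1=\nu_2 q_4^2+\nu_1 q_2^2$ and $I_2=\nu_1 q_1^2+\nu_2 q_3^2$, so you are only making explicit a cancellation the paper leaves to the reader.
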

\begin{proof}
After reduction the potential is a function of the form $ V(q_1^2 + q_2^2, q_3^2+q_4^2, q_1 q_3 + q_2 q_4)$. 
Thus the gradient with respect to $q$ is $V_q = ( 2 q_1 V_1 + q_3 V_3, 2 q_2 V_1 + q_4 V_3, 2 q_3 V_2 + q_1 V_3, 2 q_4 V_2 + q_2 V_3)$,
where $V_i$ denotes the derivative of $V$ with respect to its $i$th argument. Reading the right hand side as 
a linear equation in $V_i$, $i=1,2,3$ the solvability condition is that the left hand side is orthogonal to the kernel of the transpose 
of the matrix of the linear system. The kernel is given by $(-q_2, q_1, -q_4, q_3)^t$ and is equal to the derivative of $L_3$ with respect to $p$.
The solvability condition is $(\mu_1^2 - \mu_2^2) ( q_1 q_2 \nu_1 + q_3 q_4 \nu_2)/ ( 4 A^2 \nu_1 \nu_2) = 0$, 
which proves the stated solvability condition. Using the solvability condition the moments of inertia simplify to 
\[
   I_1 = \nu_2 q_4^2  + \nu_1 q_2^2 , \quad 
   I_2 = \nu_1 q_1^2  + \nu_2 q_3^2  \,,
\]
and this leads to the stated left hand side of the equilibrium condition.
\end{proof}

The previous Lemma is valid for an arbitrary potential depending on $x_i \cdot x_j$ only. 
From now on all statements are about the Newtonian case.

\begin{Lemma} \label{geneq}
A critical point with $p_i = 0$, $i = 1, \dots, 4$ of the reduced Hamiltonian \eqref{eqn:Hamred} for general masses has
a power series expansion for small $\mu$ given by
\begin{align*}
\frac{q_1}{\kappa \mu_1^2} & =  u^2 - \frac{ m_1}{m_2 + m_3} u^8 + O(u^{12})
\\
\frac{q_2}{\kappa \mu_1^2} & = 
\frac{3 u ^{10} m_1 (m_2-m_3) }{2 (m_2+m_3)^2}  \left( 1
-\frac{u ^4  (5 m_2^2+24 m_3 m_2+5 m_3^2)}{4 (m_2+m_3)^2} \right)
+O(u^{16})
\\
\frac{q_3}{\kappa \mu_1^2} & =   
-\frac{3 u ^{12} M m_2 m_3 (m_2-m_3) }{2 (m_2+m_3)^4} \left(1 
- \frac{ 5 u^4 (m_2^2+6 m_3 m_2+m_3^2)}{4 (m_2+m_3)^2}  \right)
+O(u^{18})
\\
\frac{q_4}{\kappa \mu_1^2} & =  1 + \frac{ 3 u^4 m_2 m_3}{2 (m_2+m_3)^2} 
+O(u^{8}) 
\end{align*}
where $\kappa = \frac{M}{m_1^2 (m_2 + m_3)^2} = \frac{ \nu_1}{\nu_2 m_1m_2 m_3}$  and
$u = \mu / (m_2 m_3 \sqrt{ \kappa /(m_2+m_3)})$.
%
\end{Lemma}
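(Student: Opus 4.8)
The plan is to solve, as a power series in the small quantity $\mu = \mu_2/\mu_1$, the four equilibrium equations of Lemma~\ref{genequcond} together with the solvability condition $q_1 q_2 \nu_1 + q_3 q_4 \nu_2 = 0$; by Lemma~\ref{lem:Veff} a critical point of $V_\eff$ with $p_i = 0$ is exactly a relative equilibrium, so this system is precisely what must be solved. First I would make the Newtonian right-hand sides explicit: writing the three invariants $s_1 = q_1^2 + q_2^2$, $s_2 = q_3^2 + q_4^2$, $s_3 = q_1 q_3 + q_2 q_4$, each mutual distance is a square root of a linear combination of $s_1, s_2, s_3$ (using $\|r_2 - r_3\| = \|x_1\|$, $\|r_3 - r_1\| = \|a_2 x_1 + x_2\|$, $\|r_1 - r_2\| = \|a_3 x_1 - x_2\|$), so that the partial derivatives $V_1, V_2, V_3$ entering the equations of Lemma~\ref{genequcond} become explicit functions of $q$.

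The decisive step is to fix the correct scaling of the four coordinates with $\mu$. As already noted after \eqref{eqn:Veff}, in the limit $\mu_2 \to 0$ the coordinates enter at very different orders: $q_4 = O(1)$ while $q_1 = O(\mu_2^2)$, and $q_2, q_3$ are smaller still. Accordingly I would introduce the rescaled parameter $u$ and the normalisation $\kappa$ as in the statement, and posit that $q_4/(\kappa \mu_1^2)$, $q_1/(\kappa \mu_1^2)$, $q_2/(\kappa \mu_1^2)$, $q_3/(\kappa \mu_1^2)$ are power series in $u$ beginning at orders $u^0, u^2, u^{10}, u^{12}$ respectively. The leading coefficients are then fixed by dominant balance: in the equation carrying $\mu_1^2$ and $2 q_4 V_2$ the $q_1$-dependence in $I_2/A^3$ cancels and the leading terms reduce to a Kepler-type relation forcing $q_4 \approx \kappa \mu_1^2$, while the equation carrying $\mu_2^2$ and $2 q_1 V_1$ fixes the $u^2$ coefficient of $q_1$ as $\mu_2^2(m_2+m_3)/(m_2 m_3)^2$. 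The solvability condition couples $q_2$ and $q_3$ through $q_3 = -\nu_1 q_1 q_2/(\nu_2 q_4)$ and, being antisymmetric under $m_2 \leftrightarrow m_3$, forces both to be proportional to $m_2 - m_3$; this is consistent with the isosceles solution of Theorem~3, where $q_2 = q_3 = 0$, and explains why these two coordinates only appear at such high order.

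Once the scaling is chosen, I would recast the five relations in the scaled unknowns and the parameter $u$ and invoke the analytic implicit function theorem: at $u = 0$ the rescaled system has a nondegenerate solution whose Jacobian with respect to the scaled unknowns is invertible — this is precisely the content of the leading-order balances each possessing a simple, nonzero root — so a unique analytic branch passes through $u = 0$. Its Taylor coefficients are then obtained recursively by matching successive powers of $u$, using the solvability relation to eliminate $q_3$ at each stage. Carrying this recursion to the orders displayed yields the stated expansions.

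The hard part will be the highly non-uniform scaling itself. Before rescaling, the Jacobian of the equilibrium map in the variables $(q_1, \dots, q_4)$ is singular as $\mu_2 \to 0$, so the implicit function theorem cannot be applied naively; one must first identify the four leading exponents $0, 2, 10, 12$ and verify them self-consistently against \emph{all} of the dominant balances, in particular against the couplings introduced by the factors $A^{-3}$, $I_1$ and $I_2$, whose own leading behaviour depends on which $q_i$ dominate. Pinning down these exponents, and checking that with them each equation acquires a consistent leading order with an invertible linearisation, is the genuinely delicate part; thereafter the computation of the coefficients is a mechanical, if lengthy, recursion.
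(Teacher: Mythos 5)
Your proposal follows essentially the same route as the paper: posit power-series ans\"atze with leading orders $u^0, u^2, u^{10}, u^{12}$ for $q_4, q_1, q_2, q_3$, fix the leading coefficients by dominant balance in the equations of Lemma~\ref{genequcond} together with the solvability condition, and determine the remaining coefficients order by order. The only point where your account is slightly too optimistic is the claim that the rescaled system has an invertible linearisation at $u=0$: the paper notes that the condition $\partial V_\eff/\partial q_2 = 0$ is \emph{not} balanced at leading order and instead only determines two higher-order coefficients, so the recursion is not a clean implicit-function-theorem step, though it still closes order by order.
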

\begin{proof}
The equilibrium condition $\partial V_{\mathrm{\eff}}/\partial q_i = 0$  in the limit $\mu \to 0$
 has only $q_4$ with a non-vanishing limit.  
 The leading orders of $q_1, q_2, q_3$ in $\mu$ are $2, 10, 12$, respectively.
 This balances the leading order of 3 of the 4 equilibrium conditions of Lemma~\ref{genequcond}.
 However, the condition $\partial V_\eff / \partial q_2 = 0$ is not balanced at leading 
 order but determines two higher order coefficients. 
The remaining higher order coefficients of the power series solutions are then determined
 order by order.  A natural dimensionless expansion parameter is $u$ as determined by the leading 
 order coefficients of $q_1$ and $q_4$.
\end{proof}

The surprisingly high powers of the leading order in $\mu$ (or $u$) for $q_2$ and $q_3$ can be interpreted 
as saying that in the collision limit $\mu_2 \to 0$ the configuration is approximately isosceles. Of course 
for $m_3 = m_2$ the solution is exactly isosceles and $q_2 = q_3 = 0$.
The distances  between the particles are  
\[
  (||r_2-r_3||, ||r_3  - r_1||,||r_1-r_2||) =\kappa \mu_1^2 \left( u^2, 1 + u^4 m_2\frac{m_2 + 3 m_3}{2 (m_2+m_3)^2}, 1 +  u^4 m_3\frac{ 3 m_2 + m_3}{2 (m_2+m_3)^2}\right)   + O(u^8)
\]
and the scalar products are 
\[
  ( ||x_1||^2 , ||x_2||^2 ,x_1 \cdot x_2)  = \kappa^2 \mu_1^4\left( u^4 + O(u^{10}), 1 + O( u^4),  \frac{ 3 m_1 (m_2-m_3)}{ 2 (m_2+m_3)^2} u^{10} + O(u^{14}) \right) \,.
\]
The area behaves like $A = \tfrac12 u^2 \kappa^2 \mu_1^4 + O(u^6)$.

\begin{Theorem}
The relative equilibrium of the 3-body problem in 4-dimensional space given in Lemma~\ref{geneq}
is a minimum of the reduced Hamiltonian \eqref{eqn:Hamred}.
\end{Theorem}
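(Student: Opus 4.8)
The plan is to apply Lemma~\ref{lem:Veff}, which reduces the claim ``$(q^*,0)$ is a minimum of $H$'' to two conditions at the critical point $q^*$ furnished by Lemma~\ref{geneq}: (i) $q^*$ is a non-degenerate strict minimum of the effective potential \eqref{eqn:Veff}, i.e.\ $\mathrm{Hess}_q V_{\eff}(q^*)>0$; and (ii) the coefficient $\tfrac{1}{2\nu_1}c_2(q_3,q_4)+\tfrac{1}{2\nu_2}c_2(q_1,q_2)$ is positive at $q^*$. The reason this split is legitimate is that, just as in the equal-mass case, the full Hessian at $(q^*,0)$ is block-diagonal in $q$ and $p$: since $f$ is even in $L_3$ and $L_3 = q_1p_2-q_2p_1+q_3p_4-q_4p_3$ is linear in $p$, the only $q$-$p$ coupling comes from the $c_2(q)L_3^2$ terms, whose mixed second derivatives at $p=0$ are proportional to $\partial L_3/\partial q_i = \pm p_j$, hence vanish at the equilibrium. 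So it suffices to verify (i) and (ii) for all sufficiently small $\mu$.

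Condition (ii) is the easy one and is exactly the sign issue flagged in the remark after \eqref{eqn:Veff}. Using that remark, the quantity in (ii) equals $\lambda = \dfrac{-\mu_1^2 I_1^{-1}+\mu_2^2 I_2^{-1}}{2(\mu_1^2-\mu_2^2)}$, and once $\lambda>0$ the momentum block $K_{\eff}(p)=\tfrac{1}{2\nu_1}(p_1^2+p_2^2)+\tfrac{1}{2\nu_2}(p_3^2+p_4^2)+\lambda L_3^2$ is manifestly positive definite. I would establish $\lambda>0$ from the leading orders in Lemma~\ref{geneq}, $q_1\sim\kappa\mu_1^2 u^2$ and $q_4\sim\kappa\mu_1^2$: these give $I_2^{-1}\sim(\nu_1 q_1^2)^{-1}\sim u^{-4}$, so $\mu_2^2 I_2^{-1}\sim u^{-2}$ dominates $\mu_1^2 I_1^{-1}\sim u^0$, and the numerator is positive for small $\mu$. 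This is precisely why the naive conclusion ``$\lambda<0$ as $\mu_2\to0$'' fails: the divergence of $I_2^{-1}$ caused by $q_1\to0$ compensates the vanishing $\mu_2^2$.

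The substance of the proof is condition (i), positive definiteness of the $4\times4$ matrix $\mathrm{Hess}_q V_{\eff}(q^*)$. I would compute this Hessian and expand every entry in powers of $u$ using Lemma~\ref{geneq}. Because the coordinates sit at widely separated orders ($q_2\sim u^{10}$, $q_3\sim u^{12}$ against $q_1\sim u^2$, $q_4\sim u^0$), the entries span many orders of magnitude, so I would first conjugate by the diagonal matrix of their leading scales to obtain a balanced matrix with a well-defined limit as $u\to0$. In that limit the configuration is asymptotically isosceles, so the balanced matrix splits into the $(q_1,q_4)$ and $(q_2,q_3)$ sub-blocks already familiar from the equal-mass analysis, and I would check each is positive definite in the $u\to0$ (equivalently $t\to0$) regime — the $(q_1,q_4)$ block being unconditionally positive and the $(q_2,q_3)$ block positive for small shape parameter. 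Positive definiteness of the balanced matrix then persists for all small $\mu$ by continuity, and the diagonal rescaling preserves the signature.

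The main obstacle will be Step 3. The widely separated orders of the coordinates make $\mathrm{Hess}_q V_{\eff}$ severely ill-conditioned, and the off-block couplings $\partial^2 V_{\eff}/\partial(q_1,q_4)\,\partial(q_2,q_3)$ — which vanish identically in the exactly isosceles case but are nonzero here — must be shown to enter only at subleading order after the rescaling, so that they cannot destroy the definiteness inherited from the two diagonal blocks. Getting the bookkeeping of these orders right, and confirming that the leading balanced matrix is genuinely positive definite rather than merely semidefinite, is where the real work lies.
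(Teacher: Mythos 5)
Your proposal follows essentially the same route as the paper: it reduces the claim via Lemma~\ref{lem:Veff}, establishes positivity of the $L_3^2$-coefficient in $K_\eff$ by exactly the paper's dominance argument (since $q_1=O(\mu_2^2)$, the term $\mu_2^2 I_2^{-1}$ beats $\mu_1^2 I_1^{-1}$ as $\mu_2\to 0$), and proves positivity of the Hessian of $V_\eff$ by expanding in $u$ and controlling the widely separated orders of the entries --- the paper does this via an additive splitting $D=D_a+D_b$ and Kato perturbation theory rather than your diagonal congruence rescaling, but this is a minor technical variant of the same idea. The proposal is correct.
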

\begin{proof}
The reduced Hamiltonian is the sum of kinetic and potential energy.
The effective potential \eqref{eqn:Veff} has a minimum at this equilibrium, as is shown in the next Lemma.
We now show that the effective kinetic energy $K_\eff$ is positive definite for sufficiently small $\mu$.
The coefficient of the correction term proportional to $L_3^2$ in $K_\eff$ is
\[
   -\frac{\mu_1^2}{I_1} + \frac{\mu_2^2}{I_2} = 
   -\frac{\mu_1^2}{\nu_2 q_4^2 + \nu_1 q_2^2} + \frac{\mu_2^2}{\nu_1 q_1^2 + \nu_2 q_3^2} =
   -\frac{\mu_1^2}{\nu_2 q_4^2} + \frac{\mu_2^2}{\nu_1 q_1^2}  + O(\mu_2^{10})
\]
and since $q_1 = O(\mu_2^2)$ the second term dominates for $\mu_2 \to 0$
while the first (negative) term is $O(1)$
and so the coefficient is positive for sufficiently small $\mu_2$.
By Lemma~\ref{lem:Veff} the Hessian of the Hamiltonian with respect to $(p_1, p_2, p_3, p_4)$ is thus positive definite 
for sufficiently small $\mu_2$. Together with the following Lemma on the positivity of the Hessian of $V_\eff$ this 
implies that the critical point is a minimum of $H$.
\end{proof}
We remark that $K_\eff$ ceases to be positive definite for $\mu_2 \to \mu_1$.
We also remark that the moments of inertia are the non-zero  eigenvalues of the original inertia tensor,
which is similar to the inertia tensor $q_{12} q_{12}^t \nu_1 + q_{34} q_{34}^t \nu_2$
and using the identity $q_1 q_2 \nu_1 + q_3 q_4 \nu_2 = 0$ gives the above moments of inertia.

\begin{Lemma}
The scaled eigenvalues of the Hessian of the effective potential \eqref{eqn:Veff}  evaluated at the equilibrium  of Lemma~\ref{geneq} 
have Laurent expansions given by
\begin{align*}
 &\frac{ m_1(m_2+m_3)}{m_2 m_3} + O(u^4) \\ 
 &\frac{ m_1^2(m_2+m_3)^3}{ m_2^2 m_3^2 M u^4} - \frac{1}{u^2} + O(u^0) \\
 &\frac{ 1}{u^6} 
+ \left( 1 + \frac{11 m_2m_3}{2 (m_2+m_3)^2} + \frac{m_2m_3}{m_1(m_2+m_3)} \right) \frac{1}{u^2}  + O(u^0)  \\
 &\frac{ 1}{ u^6 } + \frac{ 9 m_2 m_3}{2 (m_2 + m_3)^2 u^2} + \frac{ 7 m_1}{m_2 + m_3} + O(u^1)
\end{align*}
with an overall scaling factor of
$m_2 m_3 / q_4^3$ removed. 
\end{Lemma}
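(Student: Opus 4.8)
The plan is to compute the Hessian of $V_{\eff}$ explicitly and then track how its entries scale in the small parameter $u$. Starting from \eqref{eqn:Veff}, I would write $V_{\eff} = \tfrac12(\mu_1^2 I_1^{-1} + \mu_2^2 I_2^{-1}) + V$ with the simplified moments of inertia $I_1 = \nu_2 q_4^2 + \nu_1 q_2^2$ and $I_2 = \nu_1 q_1^2 + \nu_2 q_3^2$ furnished by Lemma~\ref{genequcond}, and $V$ the Newtonian potential regarded as a function of $s_1 = q_1^2+q_2^2$, $s_2 = q_3^2+q_4^2$, $s_3 = q_1 q_3 + q_2 q_4$. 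Differentiating twice by the chain rule yields a $4\times 4$ symmetric matrix $\mathcal H = (\partial^2 V_{\eff}/\partial q_i\partial q_j)$ whose entries are rational in the $q_i$ and in the partials $V_i, V_{ij}$ of the potential. This step is purely mechanical.

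The substantive step is to insert the equilibrium series of Lemma~\ref{geneq}, in which $q_1,q_2,q_3,q_4$ carry the widely separated leading orders $u^2,u^{10},u^{12},u^0$ (all times $\kappa\mu_1^2$). Each entry of $\mathcal H$ then becomes a Laurent series in $u$, and after factoring out the common scale $m_2 m_3/q_4^3$ the matrix acquires a graded, nearly block-diagonal structure: to leading order it splits into the $(q_1,q_4)$ and $(q_2,q_3)$ blocks already encountered in the equal-mass case, with the cross-block couplings (controlled by $m_2-m_3$ and high powers of $u$) entering only at subdominant order. The high leading powers of $q_2$ and $q_3$ are precisely what make this decoupling hold. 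Matching the equal-mass relation $q_1/q_4 \approx u^2$ to the shape parameter $\rho = q_1/q_4 \approx 4t$ gives $t \sim u^2/4$, which lets me import the qualitative picture of the equal-mass blocks.

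To read off the eigenvalues I would work block by block. Within each $2\times 2$ block the two diagonal orders are distinct, so its eigenvalues follow directly from the trace/determinant (quadratic-formula) expansion in $u$: the $(q_1,q_4)$ block supplies the $O(u^0)$ eigenvalue with coefficient $\tfrac{m_1(m_2+m_3)}{m_2 m_3}$ together with one stiff $O(u^{-6})$ eigenvalue, while the $(q_2,q_3)$ block supplies the $O(u^{-4})$ eigenvalue with coefficient $\tfrac{m_1^2(m_2+m_3)^3}{m_2^2 m_3^2 M}$ and the second stiff $O(u^{-6})$ eigenvalue. The delicate point is that the two stiff modes coincide at leading order $u^{-6}$ but live in different blocks; since the cross-block coupling is higher order than their subleading separation, they remain distinct branches and no genuine degeneracy analysis is needed. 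I expect the main difficulty to be the bookkeeping forced by the enormous range of scales (coordinates from $u^0$ down to $u^{12}$): one must carry the equilibrium series to high enough order, and verify that the apparent cross-block couplings are truly negligible, so that subleading coefficients — such as the $-1/u^2$ correction in the second eigenvalue and the $1/u^2$ terms in the two stiff eigenvalues — appear correctly after the relevant cancellations. A symbolic expansion of the characteristic polynomial $\det(\mathcal H - \lambda\,\mathrm{id})$ as a Laurent series in $u$ provides an independent check of the whole computation.

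Finally, the four leading coefficients $\tfrac{m_1(m_2+m_3)}{m_2 m_3}$, $\tfrac{m_1^2(m_2+m_3)^3}{m_2^2 m_3^2 M}$, $1$ and $1$ are all strictly positive, as is the removed prefactor $m_2 m_3/q_4^3$. Hence for all sufficiently small $u$ (equivalently small $\mu_2$) every eigenvalue of the Hessian of $V_{\eff}$ is positive, so the equilibrium of Lemma~\ref{geneq} is a nondegenerate minimum of $V_{\eff}$, which is exactly the input required by the positivity argument combined with Lemma~\ref{lem:Veff}.
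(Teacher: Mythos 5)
Your proposal follows essentially the same route as the paper: compute the Hessian of $V_{\eff}$, insert the equilibrium series with its widely separated orders $u^2, u^{10}, u^{12}, u^0$, and read off the eigenvalues from a nearly block-diagonal structure, with a Kato-type perturbation argument showing the small off-block couplings (which indeed vanish when $m_2=m_3$) do not affect the stated orders. The only cosmetic difference is the grouping: the paper's unperturbed part $D_a$ consists of the singleton entries in the $q_1$ and $q_4$ directions plus the $(q_2,q_3)$ block (whose internal $O(u^{-4})$ coupling must be kept, unlike the $O(u^2)$ coupling between $q_1$ and $q_4$), but at the orders claimed this is equivalent to your two-block picture, and your observation that the two $u^{-6}$ branches are separated at order $u^{-2}$ while coupled only at order $u^2$ is exactly the point needed to justify treating them independently.
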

Note that these formulas reduce to the isosceles case $m_3 = m_2$ using the relationship $u^2 = 4 t + 28 t^3 + 128 n t^4 + O(t^5)$.
As in the isosceles case, three of the eigenvalues diverge to positive infinity in the limit.

\begin{proof}
The Hessian of the effective potential evaluated at the equilibrium condition does not block-diagonalise as
in the isosceles case. 
The Hessian can be simplified using $q_1 q_2 \nu_1 + q_3 q_4 \nu_2 = 0$ and the result is
\[
\frac{ \mu_1^2 \nu_1^3 q_1^4  }{ \nu_2 I_2^3  q_4^4} \begin{pmatrix} 
 \frac{q_2^2 \nu_1 }{\nu_2 } & -q_3 q_4 & q_2 q_4 & q_2 q_3 \\
 -q_3 q_4 & 3 q_3^2 & q_1 q_4 & -3 q_1 q_3 \\
 q_2 q_4 & q_1 q_4 & \frac{q_4^2 \nu_2 }{\nu_1 } & -q_1 q_2 \\
 q_2 q_3 & -3 q_1 q_3 & -q_1 q_2 & 3 q_1^2 
\end{pmatrix}
+ \frac{ \mu_2^2 \nu_2^3 q_4^4  }{ \nu_1 I_1^3  q_1^4} \begin{pmatrix} 
 3 q_4^2 & -q_3 q_4 & -3 q_2 q_4 & q_2 q_3 \\
 -q_3 q_4 & \frac{q_1^2 \nu_1 }{\nu_2 } & q_1 q_4 & q_1 q_3 \\
 -3 q_2 q_4 & q_1 q_4 & 3 q_2^2 & -q_1 q_2 \\
 q_2 q_3 & q_1 q_3 & -q_1 q_2 & \frac{q_3^2 \nu_2 }{\nu_1 } 
\end{pmatrix}
+ D_q^2 V
\]
The prefactors of the  first two matrices are of order $-4$ and $-6$ in $\mu$, respectively. 
Considering all terms that do not involves $q_2$ or $q_3$ in the first two terms
gives the terms denoted by $a_{ij}^k$ (except $a_{22}^{-3}$) of order $2k$ in $\mu$ where $k$ ranges from 
$-3$ to 0. All other terms that involves $q_2$ or $q_3$  are of order at least 4 in $\mu$.
The Hessian $D_q^2 V$ is diagonally dominant for small $\mu$ with terms of order $-6$ and 0 in $\mu$ in the diagonal,
and these terms are also included in $a_{ii}^k$. All off-diagonal terms in $D_q^2V$ are at least of order $\mu^2$.
Thus the Hessian can be decomposed as
\[ D = D_a + D_b = 
  \begin{pmatrix}
   a_{11}^{-3} & 0 & 0 & 0 \\
 0 & a_{22}^{-3} & a_{23}^{-2} & 0 \\
 0 & a_{23}^{-2} & a_{33}^{-2} & 0 \\
 0 & 0 & 0 & a_{44}^0 \\
\end{pmatrix}
+
  \begin{pmatrix}
   0 & b_{12}^1 & b_{13}^2 & b_{14}^1 \\
 b_{12}^1 & 0 & 0 & b_{24}^2 \\
 b_{13}^2 & 0& 0 & b_{34}^3 \\
 b_{14}^1 & b_{24}^2 & b_{34}^3 & 0 \\
\end{pmatrix}
\]
where $a_{ij}^k$, $b_{ij}^k$ denote entries of leading order $2k$ in $\mu$.
Extracting the overall leading oder $\mu^{-6}$ from $D$ makes all entries power series in $\mu$.
So the symmetric $\mu^6 D$ is a perturbation of the symmetric $\mu^6 D_a$, 
and by standard theory, see, e.g.~\cite[chapter II,  \S 2.3]{Kato},
the eigenvalues of $\mu^6 D_a$ only change at order $\mu^8$, the leading order
of $\mu^6 D_b$.


The entries $a_{11}^{-3}$ and $a_{44}^0$ give two eigenvalues up to order 6 in $\mu$.
Expanding the eigenvalues of the middle $2\times 2$ block of $D_a$ gives the leading order terms 
of the other two eigenvalues as stated in the Lemma.
This shows that the eigenvalues of the Hessian of the effective potential are positive for small $\mu$.

\end{proof}

The frequencies of rotation in the cyclic angles $\theta_i$ are obtained by differentiating the reduced Hamiltonian 
with respect to $\mu_i$. At the equilibrium the only contribution comes from $V_\eff$ and hence
\[
    \omega_1 = \frac{\mu_1}{\nu_2 q_4^2 + \nu_1 q_2^2}  \approx \frac{\mu_1}{\nu_2 q_4^2}, \quad
    \omega_2 =\frac{\mu_2}{\nu_1 q_1^2 + \nu_2 q_3^2}  \approx \frac{\mu_2}{\nu_1 q_1^2} \,.
\]
Expanding  gives
\begin{align*}
   \omega_1 &= \sqrt{ \frac{M}{q_4^3} } \left( 1 - \frac{3 m_2 m_3}{4 (m_2 + m_3)^2} u^4  + O(u^8) \right)  \\
   \omega_2 &= \sqrt{ \frac{m_2+m_3}{q_1^3} } \left( 1+ \frac{m_1}{2 (m_2+m_3)} u^6 + O(u^{10}) \right) 
\end{align*}
These formulas allow for the following nice interpretation of the three-dimensional limit $u, \mu, \mu_2 \to 0$:
The mass $m_1$ encircles the colliding binary pair with frequency $\omega_1$ at 
distance $q_4$  such that  $\omega_1^2 q_4^3 = M + O(u^4)$  is  constant at 
leading order, which is Keplers 3rd law for $m_1$ encircling $m_2 +m_3$.
The binary pair of masses $m_2, m_3$ 
has diverging frequency $\omega_2$ and vanishing distance $q_1$ 
such that $\omega_2^2 q_1^3 = m_2 + m_3 + O(u^6)$ 
is constant at leading order, which is again Keplers 3rd law for $m_2$ and $m_3$ encircling each other at distance $q_1$.
In general the two frequencies are incommensurate, and hence this is a quasi-periodic relative equilibrium.

The value $h$ of the scaled value of the Hamiltonian as a function of the dimensionless angular momentum $b$ is 
\[
      h = -\frac{m_2^3 m_3^3}{ 2 (m_2 + m_3) b^2} \left( 1 - 2b + b^2 - \frac{2b^2m_1^3(m_2+m_3)^4}{Mm_2^3m_3^3}+ O(b^3) \right)
\]
The quadratic behaviour of $-1/h$ as a function of $b$ near the origin can clearly be seen in Fig.~\ref{fig1} near the origin.

There are two additional similar such solutions obtained by exchanging the masses. The formulas 
for the critical point and its eigenvalues are 
symmetric in $m_2$ and $m_3$ (except for $q_2$ and $q_3$, which flip their signs).
The fact that  $m_2, m_3$ are singled out is a result of the choice of Jacobi coordinates in the translation reduction.
The two alternative choices defining $x_1$ as either $r_3 - r_1$ or as $r_1 - r_2$ leads to two more 
solutions that are related by permuting the masses.
All three solutions limit to collision solutions, but their precise asymptotic behaviour is different
depending on the values of the masses.
When all masses are equal there is only a single family of such solutions.
When two masses are equal there are two families, one of which is shown in Fig.~\ref{fig1}.
For distinct masses there are three families with different limiting values 
for $hb^2$ given by 
$(m_i+m_j)/( m_i m_j)^3$ for each pair of indices $i,j$.

It is interesting to note that these limiting values of $h b^2$  are exactly the critical values at which 
a bifurcation of the energy surface takes place at infinity, see \cite{Albouy93}. 
In fact, there are some remarks in Albouy's paper about higher spatial dimensions \cite[section B4]{Albouy93}.
Considering $hb^2$ in the limit $\mu_2 \to 0$ has the same order as $H \mu_2^2$, and 
hence is the correct scaling invariant combination in the 3-dimensional limit.
In our analysis we found $q_4$ finite and $q_1 \to 0$. By rescaling, instead one 
can consider $q_4 \to \infty$ and $q_1$ non-zero. This corresponds to the bifurcation at infinity.
It would be interesting to analyse bifurcations at infinity in the $n$-body problem 
in general from the point of view of higher spatial dimensions.




 
 
%
%
%
 

\section{Acknowledgements}

HRD would like to thank  J\"urgen Scheurle and the Fakult\"at f\"ur Mathematik at the Technische Universit\"at M\"unchen for their 
hospitality during his sabbatical in 2018.
HRD  would like to thank Alain Albouy, Alain Chenciner, Rick Moeckel and James Montaldi for extensive discussions 
at the Observatory in Paris in 2015 when the existence of minima of the 3-body problem in $\mathbb{R}^4$
was conceived. The preprint \cite{AD19} describes some of  the results of these discussions.

\bibliographystyle{amsalpha}      

\def\cprime{$'$}
\providecommand{\bysame}{\leavevmode\hbox to3em{\hrulefill}\thinspace}
\providecommand{\MR}{\relax\ifhmode\unskip\space\fi MR }
\providecommand{\MRhref}[2]{%
  \href{http://www.ams.org/mathscinet-getitem?mr=#1}{#2}
}
\providecommand{\href}[2]{#2}

\end{document}